\documentclass[a4paper,11pt]{article}

\pagestyle{plain}                                                      
\setlength{\textwidth}{6.5in}     
\setlength{\oddsidemargin}{0in}   
\setlength{\evensidemargin}{0in}  
\setlength{\textheight}{8.5in}    
\setlength{\topmargin}{0in}       
\setlength{\headheight}{0in}      
\setlength{\headsep}{0in}         
\setlength{\footskip}{.5in}

\usepackage{color}
\usepackage[latin1]{inputenc}
\usepackage[T1]{fontenc}
\usepackage[normalem]{ulem}
\usepackage[english]{babel}
\usepackage{verbatim}
\usepackage{graphicx}
\usepackage{enumerate,enumitem}
\usepackage{amsmath,amssymb,amsfonts,amsthm,mathrsfs}
\usepackage{rotating,relsize}
\usepackage{float}
\usepackage{array}
\usepackage{MnSymbol}
\usepackage[all,cmtip]{xy}
\usepackage[hidelinks]{hyperref}
\xyoption{all}
\setlist[enumerate]{leftmargin=.7cm,label=\roman*)}
\usepackage{mathbbol}
\usepackage{authblk}
\usepackage{enumerate}

\newtheorem{theorem}{Theorem}[section]
\newtheorem{theoremA}{Theorem}

\newtheorem*{theorem*}{Theorem}
\newtheorem*{cor*}{Corollary}

\newtheorem{lemma}[theorem]{Lemma}
\newtheorem{prop}[theorem]{Proposition}
\newtheorem{cor}[theorem]{Corollary}

\theoremstyle{definition}
\newtheorem*{remark*}{Remark}
\newtheorem*{rem*}{Remark}
\newtheorem{defn}[theorem]{Definition}
\newtheorem{rem}[theorem]{Remark}
\newtheorem{example}[theorem]{Example}

\newcommand{\xra}{\xrightarrow}

\DeclareMathOperator{\gl}{gl}

\DeclareMathOperator{\Hom}{Hom}

\DeclareMathOperator{\Z}{\mathbb{Z}}




\DeclareMathOperator{\orb}{orb}

\DeclareMathOperator{\Gen}{Gen}

\newcommand{\bs}{\backslash}

\newcommand{\un}{\underline}
\newcommand{\td}[1]{\langle #1\rangle}

\title{}
\author[1]{Irakli Patchkoria}
\affil[1]{}
\date{}

\begin{document}

\begin{center}\LARGE{Chromatic congruences and Bernoulli numbers}
\end{center}

\begin{center}\Large{Irakli Patchkoria}
\end{center}

\begin{center}\emph{\footnotesize{Dedicated to the memory of my great granduncle Terenti Shamugia,\\a talented mathematician who passed away very young}}
\end{center}

\vspace{.05cm}

\abstract{For every natural number $n$ and a fixed prime $p$, we prove a new congruence for the orbifold Euler characteristic of a group. The $p$-adic limit of these congruences as $n$ tends to infinity recovers the Brown-Quillen congruence. We apply these results to mapping class groups and using the Harer-Zagier formula we obtain a family of congruences for Bernoulli numbers. We show that these congruences in particular recover classical congruences for Bernoulli numbers due to Kummer, Voronoi, Carlitz, and Cohen.} 

\vspace{.05cm}



\section{Introduction} The \emph{orbifold Euler characteristic} $\chi_{\orb}(G)$ of a group is a well-studied invariant which has applications in topology, group theory, and number theory. It was first defined by Wall \cite{Walleuler}. Harder \cite{Hard} and Serre \cite{Serreeuler} related it to number theory by computing the orbifold Euler characteristic of arithmetic groups in terms of Dedekind zeta functions. 

Given a discrete group $G$, the invariant $\chi_{\orb}(G)$ is a rational number defined under certain conditions on $G$. The conditions we will use in this paper are the following: $G$ is virtually torsion-free and admits a finite classifying $G$-space $\un{E}G$ for proper actions. The latter is uniquely determined up to a $G$-equivariant homotopy equivalence with the properties: 
\begin{itemize}

\item $\underline{E}G$ is a $G$-CW-complex; 

\item The $H$-fixed subspace $\underline{E}G^H$ is contractible if $H \leq G$ is finite and empty otherwise. 

\end{itemize}

For any discrete group $G$, a model of $\un{E}G$ always exists. We say that $G$ \emph{admits a finite $\un{E}G$} if there exists a finite $G$-CW complex model of $\un{E}G$. The orbifold Euler characteristic can be defined for more general groups but the vast majority of examples one cares about in geometric group theory and number theory satisfy the above conditions. For more details and examples see e.g., \cite{Lsurvey}.

Under the additional assumption that any elementary abelian $p$-subgroup of $G$ has the rank at most $1$, Brown in \cite{KBro1}  and \cite{KBro3} proved a congruence 
\[\chi_{\orb}(G) \equiv \frac{1}{p-1} \sum_{[g]} \chi_{\orb}(C\td{g}) \mod \Z_{(p)},\]
where $[g]$ runs over the conjugacy classes of order $p$ elements and $C\td{g}$ denotes the centralizer of $g$. Brown applied this formula to certain symplectic groups over number rings and using theorems of Harder and Serre, provided new results about the denominators of special values of Dedekind zeta functions. An interesting special case is given by the symplectic group $G=Sp_{p-1}(\mathbb{Z})$ where $p$ is an odd prime. In this case using \cite{Hard}, Brown's congruence recovers the congruence
\[\zeta(-1) \cdot \zeta(-3) \cdot \cdots \cdot \zeta(2-p) \equiv \frac{2^{\frac{p-3}{2}}}{p(p-1)} \cdot h_p^{-} \mod \Z_{(p)},\]
where $\zeta$ is the Riemann zeta function and $h_p^{-}$ is the first factor of the class number of $\mathbb{Q}(\zeta_p)$. Using the formula $\zeta(1-2n)=-\frac{B_{2n}}{2n}$, where $B_{2n}$ is $2n$-th Bernoulli number and the von Staudt-Clausen theorem \cite{Staudt, Clausen}, one concludes that $p$ does not divide the first factor $h_p^{-}$ if an only if $p$ does not divide the numerator of any of the Bernoulli numbers $B_2, B_4, \dots, B_{p-3}$. This is a well-known criterion for regularity proved by Kummer in \cite{Kumregular}. 

Later Quillen generalized Brown's congruence and removed the restriction on the rank of elementary abelian $p$-subgroups. Quillen showed that if $G$ is a virtually torsion-free group with a finite $\un{E}G$, then one has
\[ \sum_{(E)} (-1)^{r(E)}p^{\binom{r(E)}{2}}\chi_{\orb}(N(E)) \equiv 0 \mod \Z_{(p)},\]
where the sum runs over the conjugacy classes of elementary abelian $p$-subgroups, $r(E)$ denotes the rank of $E$, and $N(E)$ the normalizer of $E$. The proof was written up by Brown in \cite[Theorem 14.2]{Brownbook}. We refer to this congruence as the \emph{Brown-Quillen congruence}. The proof of Quillen and Brown relies on the homotopical and homological analysis of the geometric realization of the poset of non-trivial elementary abelian $p$-subgroups. In particular, it uses a version of the Solomon-Tits theorem and spectral sequence arguments. 

The goal of this paper is to provide new congruences for $\chi_{\orb}(G)$ and apply them to the mapping class group $\Gamma_u$ of the closed oriented surface of genus $u$. Following \cite{HKR} we denote by $G_{n,p}$ the set of $n$-tuples $(g_1, \dots, g_n)$ of commuting elements ($g_i g_j=g_jg_i$ for all $i$ and $j$), each of which has $p$-power order. The group $G$ acts on $G_{n,p}$ by conjugation in each coordinate. If $G$ admits a finite $\un{E}G$, then the quotient set $G \bs G_{n,p}$ is finite. The following is the first main result of this paper:

\begin{theoremA} \label{theorema} Let $p$ be a prime and $n \geq 1$. Suppose $G$ is a virtually torsion-free discrete group with a finite $\underline{E}G$. Then we have
\[\sum_{[g_1,\dots,g_n] \in G \bs G_{n,p}} \chi_{\orb}(C\td{g_1, \dots, g_n}) \equiv 0 \mod \Z_{(p)},\]
where $C\td{g_1, \dots, g_n}$ denotes the centralizer of the subgroup generated by the tuple $(g_1, \dots, g_n)$. 
\end{theoremA}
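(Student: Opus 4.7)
The plan is to unfold both sides cellularly using a finite model of $\underline{E}G$, swap summations, and then reduce to a Frobenius-type divisibility about commuting $p$-power tuples in finite groups.

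First I would observe that for any finite subgroup $A \leq G$, the fixed subcomplex $\underline{E}G^A$ is a finite $C_G(A)$-CW model of $\underline{E}C_G(A)$: any finite subgroup $H \leq C_G(A)$ centralizes $A$, so $HA$ is a finite subgroup of $G$ and $(\underline{E}G^A)^H = \underline{E}G^{HA}$ is contractible by the defining property of $\underline{E}G$. Applied with $A = \td{g_1,\dots,g_n}$ for a tuple $(g_1,\dots,g_n) \in G_{n,p}$ (which generates a finite abelian $p$-group), the cellular formula for the orbifold Euler characteristic yields
\[\chi_{\orb}(C\td{g_1,\dots,g_n}) = \sum_{[\sigma]} \frac{(-1)^{\dim \sigma}}{|C_{G_\sigma}\td{g_1,\dots,g_n}|},\]
where $[\sigma]$ ranges over $C\td{g_1,\dots,g_n}$-orbits of cells of $\underline{E}G$ whose stabilizer $G_\sigma$ contains $\td{g_1,\dots,g_n}$.

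Next I would swap the order of summation. Summing over $G$-conjugacy classes of tuples amounts to a double sum over $G$-orbits of pairs $(\sigma,(g_1,\dots,g_n))$ with $(g_1,\dots,g_n)\in(G_\sigma)_{n,p}$; re-grouping by cell orbits first and applying orbit-stabilizer to the $G_\sigma$-action on $(G_\sigma)_{n,p}$ gives
\[\sum_{[g_1,\dots,g_n] \in G \bs G_{n,p}} \chi_{\orb}(C\td{g_1,\dots,g_n}) = \sum_{[\sigma] \in G \bs \underline{E}G} (-1)^{\dim \sigma}\, \frac{|(G_\sigma)_{n,p}|}{|G_\sigma|}.\]

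The hard part is then to show that for every finite group $H$ the $p$-part $|H|_p$ divides $|H_{n,p}|$; granting this, each ratio $|(G_\sigma)_{n,p}|/|G_\sigma|$ lies in $\Z_{(p)}$ (since $|G_\sigma|_{p'}$ is a unit there), and the sum is finite by the finiteness of $\underline{E}G$, yielding the desired congruence. I would prove the divisibility by induction on $n$: the case $n=1$ is Frobenius's theorem applied with exponent $|H|_p$, and for the inductive step I would decompose
\[|H_{n,p}| = \sum_{[g] \in H \bs H_{1,p}} [H:C_H(g)] \cdot |C_H(g)_{n-1,p}|,\]
write $|C_H(g)_{n-1,p}| = |C_H(g)|_p \cdot m_g$ using the inductive hypothesis applied to $C_H(g)$, and observe that $[H:C_H(g)]\cdot |C_H(g)|_p = |H|_p \cdot (|H|_{p'}/|C_H(g)|_{p'})$ is an integer multiple of $|H|_p$ because $|C_H(g)|_{p'}$ divides $|H|_{p'}$. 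The subtle point is precisely this identity, which makes the $p$- and $p'$-parts of $|H|$ separate correctly so that the induction closes.
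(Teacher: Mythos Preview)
Your proposal is correct and follows essentially the same route as the paper. The paper establishes the same cell-by-cell identity $\sum_{[g_1,\dots,g_n]}\chi_{\orb}(C\td{g_1,\dots,g_n})=\sum_{G\sigma}(-1)^{n_\sigma}|H^\sigma_{n,p}|/|H^\sigma|$ (its Proposition~\ref{chromaticidentity}) and then invokes the same Frobenius-type divisibility $|H|_p\mid |H_{n,p}|$ (its Lemma~\ref{Frobenius}); the only cosmetic differences are that the paper verifies the identity by checking it on a single orbit $G/H$ via an explicit bijection rather than your direct double-counting over pairs $(\sigma,(g_1,\dots,g_n))$, and that its induction peels off $n-1$ coordinates at once to reduce to the $n=1$ case for centralizers rather than one coordinate at a time. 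One small point to tighten: you assert that $\underline{E}G^A$ is a \emph{finite} $C_G(A)$-CW complex but only justify that it is a model of $\underline{E}C_G(A)$; finiteness needs the observation that $(G/K)^A$ has finitely many $C_G(A)$-orbits (this is the paper's Proposition~\ref{lemma: finite model for centralizers}).
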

The proof of this theorem uses a classical fact from group theory known as the Frobenius theorem \cite{Frobenius}: If $n$ divides the order of a finite group $H$, then the number of solutions of the equation $x^n=1$ in $H$ is a multiple of $n$. 

\begin{remark*} We refer to the congruence in Theorem \ref{theorema} as \emph{the chromatic congruence at height $n$}, because the orbit set $H \bs H_{n,p}$ plays a prominent role in the chromatic homotopy theory. By the work of Hopkins-Kuhn-Ravenel, for $H$ a finite group and $n \geq 1$, the cardinality $\vert H \bs H_{n,p} \vert$ is equal to the Morava $K$-theory Euler characteristic $\chi_{K(n)}(BH)$ of the classifying space $BH$ \cite[Theorem B]{HKR}. The $n$-th Morava $K$-theory spectrum $K(n)$ is a complex oriented theory with a formal group law of height $n$. 
\end{remark*} 

To relate the congruence of Theorem \ref{theorema} to the Brown-Quillen congruence we use a result of Hall from \cite{Hall} which uses the M\"obius inversion for posets. In particular, Hall's result computes the cardinality of the set of generating $n$-tuples of a finite abelian $p$-group. An account can be found in \cite[Section 6.B]{Dia} where this set shows up as the state space of a natural Markov chain for random generation. Reformulating Theorem \ref{theorema} in terms of the generating tuples yields:

\begin{theoremA} \label{theoremb}  Let $p$ be a prime and $n \geq 1$. Suppose $G$ is a virtually torsion-free discrete group with a finite $\underline{E}G$. Then we have
\[\sum_{(H)} \chi_{\orb} (N(H)) \cdot  \vert \Phi(H) \vert^n \cdot \big ( \sum_{i=0}^{r(H/\Phi(H))} (-1)^i p^{n(r(H/\Phi(H))-i)+\binom{i}{2}} \cdot \binom{r(H/\Phi(H))}{i}_p  \big) \equiv 0 \mod \Z_{(p)},\]
where the sum runs over the conjugacy classes of finite abelian $p$-subgroups generated by at most $n$ elements. Here $N(H)$ denotes the normalizer of $H$, $\Phi(H)$ the Frattini subgroup of $H$, $\binom{a}{b}_p$ is the $p$-binomial coefficient, and $r$ is the rank. \end{theoremA}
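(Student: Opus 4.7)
The plan is to take the congruence of Theorem \ref{theorema} and regroup the sum over $G \bs G_{n,p}$ according to the subgroup generated by each tuple. Given a commuting tuple $(g_1, \ldots, g_n) \in G_{n,p}$, the subgroup $H := \td{g_1, \ldots, g_n}$ is a finite abelian $p$-group, and its centralizer coincides with $C\td{g_1,\ldots,g_n}$. Thus the summand in Theorem \ref{theorema} depends only on the subgroup, not on the particular generating tuple.

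Fix a $G$-conjugacy class $[H]$ of a finite abelian $p$-subgroup that can be generated by at most $n$ elements, and let $\Gen_n(H) \subseteq H^n$ denote the set of $n$-tuples generating $H$. The map sending a tuple to the subgroup it generates is $G$-equivariant, and the setwise stabilizer of $H$ under conjugation is $N(H)$. Hence the $G$-orbits of tuples whose generated subgroup lies in $[H]$ correspond bijectively to $N(H) \bs \Gen_n(H)$. Since $C(H)$ is normal in $N(H)$ and acts trivially on $H$, this action factors through the finite group $N(H)/C(H) \hookrightarrow \Aut(H)$. The crucial observation is that $\Aut(H)$ acts \emph{freely} on $\Gen_n(H)$ (an automorphism fixing a generating set is the identity), so the action of $N(H)/C(H)$ is free and
\[\vert N(H) \bs \Gen_n(H) \vert \;=\; \frac{\vert \Gen_n(H) \vert}{[N(H):C(H)]}.\]
Combined with the index formula $\chi_{\orb}(C(H)) = [N(H):C(H)] \cdot \chi_{\orb}(N(H))$ (valid because $C(H)$ has finite index in $N(H)$, as $\Aut(H)$ is finite), the contribution of the class $[H]$ to the sum in Theorem \ref{theorema} simplifies to $\vert \Gen_n(H) \vert \cdot \chi_{\orb}(N(H))$.

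To finish, I would invoke Hall's formula for $\vert \Gen_n(H) \vert$. Setting $r := r(H/\Phi(H))$, a tuple generates $H$ if and only if its reduction modulo $\Phi(H)$ spans $H/\Phi(H) \cong \F_p^r$ as an $\F_p$-vector space, so $\vert \Gen_n(H) \vert = \vert \Phi(H) \vert^n \cdot s_n(r)$, where $s_n(r)$ is the number of spanning $n$-tuples in $\F_p^r$. Möbius inversion on the subspace lattice, using $\mu(U,V) = (-1)^{\dim V - \dim U} p^{\binom{\dim V - \dim U}{2}}$ and the fact that $\F_p^r$ has $\binom{r}{i}_p$ subspaces of dimension $i$, gives
\[s_n(r) = \sum_{i=0}^{r} (-1)^i \, p^{\binom{i}{2}} \binom{r}{i}_p \, p^{n(r-i)}.\]
Substituting this back reproduces the bracketed expression in Theorem \ref{theoremb}, and the claimed congruence is then immediate from Theorem \ref{theorema}. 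The main obstacle is the middle step: correctly identifying the orbit count $\vert N(H) \bs \Gen_n(H) \vert$ via freeness of the $\Aut(H)$-action and then absorbing the index $[N(H):C(H)]$ into the transition from $\chi_{\orb}(C(H))$ to $\chi_{\orb}(N(H))$. Once this interplay is cleanly pinned down, the remainder is the standard Möbius computation on the subspaces of $\F_p^r$ and an appeal to Hall.
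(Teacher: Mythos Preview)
Your proposal is correct and follows essentially the same route as the paper: the paper proves the identity $\sum_{[g_1,\dots,g_n]} \chi_{\orb}(C\td{g_1,\dots,g_n}) = \sum_{(H)} \vert \Gen_n(H)\vert\,\chi_{\orb}(N(H))$ via exactly your regrouping and the freeness of the $W(H)=N(H)/C(H)$ action on $\Gen_n(H)$, and then plugs in Hall's count of $\vert\Gen_n(H)\vert$ (citing Hall and Diaconis rather than spelling out the M\"obius inversion as you do). The only cosmetic difference is that you justify freeness via the embedding $W(H)\hookrightarrow\Aut(H)$ and derive Hall's formula explicitly, whereas the paper states these as known facts.
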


This theorem in particular provides a new and in some sense an elementary proof of the Brown-Quillen congruence. Below we show that the $p$-adic limit of the left hand side exists as $n \to \infty$ and the limit is equal to the left hand side of the Brown-Quillen congruence. Our proof only uses the abstract existence of a finite $\un{E}G$ and combinatorial arguments. In particular, we fully avoid to deal with the homotopy or homology type of the poset of non-trivial elementary abelian $p$-subgroups. 

\subsection*{Application} We apply Theorem \ref{theorema} to the mapping class group $\Gamma_u$ of a closed oriented surface $S_u$ of genus $u$. By \cite{HZ}, one has 
\[\chi_{\orb}(\Gamma_u)=\frac{\zeta(1-2u)}{2-2u}=\frac{B_{2u}}{2u(2u-2)}.\]
Hence using Theorem \ref{theorema}, one obtains an infinite family of congruences for Bernoulli numbers. We make these congruences explicit at height $n=1$. Turns out that they recover the following classical congruences for a prime $p \geq 5$:

\begin{itemize} 

\item The congruence 
\[\frac{B_{2u}}{2u} \equiv \frac{B_{2v}}{2v} \mod p^r\Z_{(p)},\]
for $2u=p^rx+2$ and $2v=p^{r-1}x+2$, where $r, x \geq 1$ and $p-1 \nmid 2u$. This is a special case of Kummer's congruence \cite{Kum, Vor}, proved in the original form by Kummer and then generalized by Voronoi. 

\item The congruence 
\[B_{2u} +\frac{1}{p} \equiv 1 \mod p^r\Z_{(p)},\]
for $2u=xp^r(p-1)$, where $r\geq 0$, $x \geq 1$, and $p \; \nmid \; 2u-2$. This is a congruence due to Carlitz \cite{Carl} generalizing the $p$-local von Staudt-Clausen theorem \cite{Staudt, Clausen}. 

\item The congruence
\[\frac{B_{2u}}{2u}- \frac{B_{2v}}{2v} \equiv \Big (\frac{1}{2u}-\frac{1}{2v} \Big) \Big(1-{\frac{1}{p}}\Big) \mod p^r\Z_{(p)},\]
for $u=p^r\Big( \frac{p-1}{2}k-1  \Big)+1$ and $v=p^{r-1}\Big( \frac{p-1}{2}k-1  \Big)+1$, where $r, k \geq 1$. This congruence seems to be less known than the previous two. To our knowledge the only written down proof in the literature is due to Cohen  \cite[Proposition 11.4.4]{Cohen}. Cohen's proof crucially uses the theory of $p$-adic $L$-functions. Our proof is very different. We instead use the geometry of the moduli space (e.g., the results of \cite{HZ}) and Theorem \ref{theorema}. 
\end{itemize}

To apply Theorem \ref{theorema}, we require the knowledge of abelian $p$-subgroups. We can freely choose the number $n$ and this allows us to have some control on the cardinalities of generating sets. For example, the congruence at height $n=1$ requires only the knowledge of cyclic $p$-subgroups. On the other hand, to use the Brown-Quillen congruence, we require the knowledge of all elementary abelian $p$-subgroups. There are cases when it is easier to understand cyclic $p$-subgroups than elementary abelian $p$-subgroups. This seems to be the case for mapping class groups. By the work of Harer-Zagier \cite{HZ}, we have all the data available to apply Theorem  \ref{theorema} at height $n=1$. On the other hand, we do not have a complete classification of elementary abelian $p$-subgroups and their centralizers. However, the papers \cite{Broughton2, Broughton3} have made significant progress in this direction and one could try to use them to make the Brown-Quillen congruence explicit for all mapping class groups. This has not yet been addressed in the literature. 

A good example to illustrate the difference between various congruences in this paper is the mapping class group $\Gamma_{\frac{(p-1)(p-2)}{2}}$ for a prime $p \geq 5$. For any $n \geq 1$, Theorem \ref{theoremb} gives a congruence
\[\frac{\zeta(1-2u)}{2-2u}+\frac{p^n-1}{p-1}\Big(-\frac{1}{p}\cdot \frac{\zeta(1-2v)}{2-2v} + \frac{(p-1)^2}{2u \cdot p(2u+2p-2)} \Big )+(p^{2n}-(1+p)p^n+p)\cdot \frac{1}{6p^2} \equiv 0 \mod \Z_{(p)},\]
where $u=\frac{(p-1)(p-2)}{2}$ and $v=\frac{p-1}{2}$. If we take the $p$-adic limit as $n \to \infty$, we get the Brown-Quillen congruence
\[\frac{\zeta(1-2u)}{2-2u}-\frac{1}{p-1}\Big(-\frac{1}{p}\cdot \frac{\zeta(1-2v)}{2-2v} + \frac{(p-1)^2}{2u \cdot p(2u+2p-2)} \Big )+\frac{1}{6p} \equiv 0 \mod \Z_{(p)}.\]
In the special case $n=1$ one gets:
\[ \frac{\zeta(1-2u)}{2-2u}-\frac{1}{p}\cdot \frac{\zeta(1-2v)}{2-2v} + \frac{(p-1)^2}{2u \cdot p(2u+2p-2)} \equiv 0 \mod \Z_{(p)}.\]
The latter does not require the knowledge of elementary abelian $p$-subgroups of rank bigger than $1$. However, the formulas for $n \geq 2$ and the Brown-Quillen congruence need the higher rank subgroups. Here the number theoretic difference between $n=1$ and $n \geq 2$ cases can be understood through the von Staudt-Clausen theorem, as discussed at the end of the paper.

We expect that the general congruences provided in this paper will have further applications in number theory. These could especially come from arithmetic groups where cyclic subgroups are easier to understand than the elementary abelian subgroups. Even more specifically, we are interested in arithmetic groups whose nontrivial cyclic $p$-subgroups are all isomorphic to $\Z/p$ but additionally also have an elementary abelian $p$-subgroup of rank higher than $1$. An example of such a group is the symplectic group $Sp_{2(p-1)}(\Z)$. 

\subsection*{Acknowledgments} I would like to thank Wolfgang L\"uck, Oscar Randal-Williams, and Stefan Schwede for helpful conversations. This research was supported by the EPSRC grant EP/X038424/1 ``Classifying spaces, proper actions and stable homotopy theory''.

\section{Preliminaries}

A \emph{proper} $G$-CW complex is a $G$-CW complex $X$ such that the stabilizers of cells are finite. In particular, the $G$-space $\un{E}G$ is a proper $G$-CW complex. A proper $G$-CW complex $X$ is called \emph{finite} if it has finitely many equivariant cells. 

In what follows $N(H)$ denotes the normalizer of $H \leq G$ and $C(H)$ denotes the centralizer of $H$. We will denote by $W(H)$ the quotient $W(H)=N(H)/C(H)$ often referred to as the \emph{Weyl group} of $H$. If $H$ is finite, then  $W(H)$ is finite. 

\begin{prop} \label{lemma: finite model for centralizers}
  Let $X$ be a finite proper $G$-CW complex
  and $H \leq G$ a finite subgroup.
  Then the $C(H)$-space $X^{H}$ is a finite proper $C(H)$-CW complex. 
\end{prop}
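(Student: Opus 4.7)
The plan is to use the $G$-CW structure on $X$ together with a skeletal induction. The standard fact that the $H$-fixed-point functor commutes with pushouts of $G$-equivariant cellular inclusions (for any subgroup $H \leq G$) reduces the problem to analyzing the fixed points of a single equivariant cell: for each finite $K \leq G$ and each $n$, I need $(G/K \times D^n)^H = (G/K)^H \times D^n$ to be a finite disjoint union of $C(H)$-cells of the form $C(H)/L \times D^n$ with $L$ finite. Assembling these contributions across the finitely many cells of $X$ will then give the finite proper $C(H)$-CW structure on $X^H$.

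The substantive point is therefore to show that the $C(H)$-set
\[(G/K)^H = \{gK \in G/K : g^{-1}Hg \leq K\}\]
is finite and has finite stabilizers, whenever $K$ is finite. The stabilizer of $gK$ in $C(H)$ is $C(H) \cap gKg^{-1}$, which sits inside the finite group $gKg^{-1}$ and so is finite. For the finiteness of the orbit set, I will first pass to $N(H)$-orbits: since $H$ is finite, the Weyl group $W(H) = N(H)/C(H)$ injects into $\Aut(H)$ and hence is finite, so any finite $N(H)$-set is automatically a finite $C(H)$-set. I will then set up a bijection
\[N(H) \bs (G/K)^H \;\longleftrightarrow\; \{K\text{-conjugacy classes of subgroups of }K\text{ that are }G\text{-conjugate to }H\}\]
by sending the $N(H)$-orbit of $gK$ to the $K$-conjugacy class of $g^{-1}Hg$. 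This is well-defined (replacing $g$ by $gk$ conjugates $g^{-1}Hg$ by $k^{-1}\in K$; left multiplication by $n\in N(H)$ leaves $g^{-1}Hg$ unchanged) and clearly surjective. For injectivity, if $g'^{-1}Hg' = k^{-1}g^{-1}Hgk$ for some $k\in K$, a direct computation shows that $n := gkg'^{-1}$ lies in $N(H)$ and satisfies $n^{-1}gK = g'K$. Since $K$ is finite, it has only finitely many subgroups, so the right-hand side is finite.

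Combining these ingredients, each $G$-equivariant cell of $X$ contributes finitely many $C(H)$-equivariant cells with finite isotropy to $X^H$, and the skeletal induction terminates after finitely many steps with a finite proper $C(H)$-CW structure on $X^H$. I do not expect a real obstacle: the bijection between orbits and $K$-conjugacy classes above is the only piece with genuine content, while the topological steps (fixed points commuting with CW pushouts, and the skeletal induction itself) are standard for finite groups acting on proper $G$-CW complexes.
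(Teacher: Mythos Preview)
Your argument is correct and follows essentially the same strategy as the paper: reduce to the orbit cell $(G/K)^H$, observe that stabilizers are $C(H)\cap gKg^{-1}$ and hence finite, and then bound the orbit set by mapping into something finite built from $K$. The one cosmetic difference is that the paper works directly with $C(H)$-orbits, sending $[xK]\in C(H)\bs (G/K)^H$ to the conjugation homomorphism $h\mapsto x^{-1}hx$ in $\Hom(H,K)/K$; this is already injective, so there is no need to pass through $N(H)$-orbits or invoke finiteness of $W(H)$.
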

\begin{proof} It is enough to check that for any finite subgroup $K\leq G$,
  the $H$-fixed point set $(G/K)^H$ as a $C(H)$-set has only finite stabilizers
  and finitely many orbits. To show this we consider a map
  \[ C(H) \bs (G/K)^H \to \Hom(H,K)/K ,  \]
  where $\Hom(H,K)$ is the set of group homomorphisms and $K$ acts by the conjugation. This map sends $[x K]$ to $c_x : H \to K$, $c_x(h)=x^{-1}hx$. It is well defined and injective and since the target is finite, so is the source. The stabilizer of $xK \in (G/K)^H$ is given by $C(H) \cap xKx^{-1}$ which is also finite. \end{proof}
  
\begin{cor} \label{Normalizer and Centralizer} Suppose $G$ admits a finite $\un{E}G$ and let $H \leq G$ be a finite subgroup. Then $\un{E}G^H$ is a finite model of $\un{E}N(H)$ and  $\un{E}C(H)$. 
\end{cor}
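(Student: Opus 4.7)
The plan is to apply Proposition \ref{lemma: finite model for centralizers} with $X = \un{E}G$ to obtain a finite proper $C(H)$-CW structure on $\un{E}G^H$, then promote this to $N(H)$ and verify the defining property of a classifying space for proper actions.

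First, since by hypothesis $\un{E}G$ is a finite proper $G$-CW complex, Proposition \ref{lemma: finite model for centralizers} immediately asserts that $\un{E}G^H$ is a finite proper $C(H)$-CW complex. For the $N(H)$-case, observe that $N(H)$ acts on $\un{E}G^H$ because conjugation by $N(H)$ preserves $H$, so that if $n \in N(H)$, $x \in \un{E}G^H$, and $h \in H$, then $h(nx) = n(n^{-1}hn)x = nx$. Since $H$ is finite, the Weyl group $W(H) = N(H)/C(H)$ embeds into $\Aut(H)$ and is therefore finite; consequently $C(H)$ has finite index in $N(H)$, and coarsening the $C(H)$-orbits into $N(H)$-orbits yields a finite $N(H)$-CW structure whose cell stabilizers remain finite.

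It remains to verify the fixed-point criterion. For any subgroup $K \leq N(H)$, I will use the identification
\[ (\un{E}G^H)^K \;=\; \un{E}G^{\langle K, H \rangle}, \]
where $\langle K, H \rangle$ denotes the subgroup of $G$ generated by $K$ and $H$. If $K$ is finite then, since $K$ normalizes $H$, the product $KH$ is a subgroup coinciding with $\langle K, H \rangle$, and it is finite as a union of finitely many cosets of $H$; hence $\un{E}G^{\langle K, H \rangle}$ is contractible by the defining property of $\un{E}G$. Conversely, if $(\un{E}G^H)^K$ is non-empty, any point in it lies in a cell of $\un{E}G$ with finite stabilizer containing $\langle K, H \rangle$, so $K$ is finite. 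The same reasoning applies verbatim to $K \leq C(H)$, where in addition $KH$ is a direct product of finite groups.

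The only genuine work is the passage from a $C(H)$-CW structure to an $N(H)$-CW structure, which is routine but ultimately hinges on the finiteness of $W(H)$; this is exactly why the conclusion holds simultaneously for both $N(H)$ and $C(H)$.
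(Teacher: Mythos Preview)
Your proof is correct and supplies exactly the details the paper omits: the corollary is stated there without proof, as an immediate consequence of Proposition~\ref{lemma: finite model for centralizers}. Your verification of the fixed-point criterion via $(\un{E}G^H)^K = \un{E}G^{\langle K,H\rangle}$ and the finiteness of $KH$ for $K \leq N(H)$ finite is precisely what is needed.

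One small expositional point: the phrase ``coarsening the $C(H)$-orbits into $N(H)$-orbits yields a finite $N(H)$-CW structure'' is slightly loose as written. An arbitrary $C(H)$-CW structure on a space need not upgrade to an $N(H)$-CW structure just because $N(H)$ acts and $[N(H):C(H)]<\infty$. What makes it work here is that the $C(H)$-cells in Proposition~\ref{lemma: finite model for centralizers} arise from the ambient $G$-CW structure: the $H$-fixed points of a $G$-cell $G/K \times D^n$ are $(G/K)^H \times D^n$, and $(G/K)^H$ is already an $N(H)$-set, so $\un{E}G^H$ inherits an $N(H)$-CW structure directly. Finiteness of $N(H)$-orbits then follows because $N(H)\backslash(G/K)^H$ is a quotient of $C(H)\backslash(G/K)^H$, and finiteness of stabilizers because an $N(H)$-stabilizer contains the corresponding $C(H)$-stabilizer with index at most $|W(H)|$. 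Your argument implicitly uses this, but it would be cleaner to say so rather than framing it as a formal passage from a $C(H)$-structure to an $N(H)$-structure.
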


The following definition is due to Wall \cite{Walleuler}:

\begin{defn} \label{orbidef} Let $G$ be a virtually torsion-free discrete group with $\Lambda \leq G$ a finite index torsion-free subgroup and $X$ a finite proper $G$-CW complex. Then the \emph{orbifold Euler characteristic} of $X$, denoted by $\chi^G_{\orb}(X)$, is defined to be the rational number
\[\frac{\chi_\mathbb{Q}(\Lambda \bs X)}{[G:\Lambda]},\]
where $\chi_\mathbb{Q}$ is the classical Euler characteristic. 
If $G$ admits a finite $\un{E}G$, then $\chi^G_{\orb}(\un{E}G)$ is called the \emph{orbifold Euler characteristic of $G$} and is denoted by $\chi_{\orb}(G)$. 
\end{defn}
We observe that 
\[\chi_{\orb}(G)=\frac{\chi_\mathbb{Q}(B\Lambda)}{[G:\Lambda]},\]
since $\un{E}G$ as a $\Lambda$-space is a finite model of $E \Lambda$, the free contractible $\Lambda$-CW complex. Here $B\Lambda=\Lambda \bs E \Lambda$ is the classifying space. This definition is independent of the choice of $\Lambda$. 

The following is known as Quillen's formula (see e.g., \cite[Proposition 7.3]{Brownbook}):

\begin{prop} \label{Quillenformula}
Let $G$ be a virtually torsion-free discrete group.
  \begin{enumerate}
  \item
    For every finite proper $G$-CW complex $X$, one has
    \[\chi^G_{\orb}(X)=\sum_{G\sigma} (-1)^{n_\sigma} \frac{1}{\vert H^{\sigma} \vert},\]
    where the sum runs over all $G$-orbits of cells $\sigma$ of $X$, the number $n_\sigma$ is the dimension of $\sigma$, and $H^{\sigma}$ is the stabilizer of $\sigma$. 
  \item
    If $G$ admits a finite model for $\underline{E}G$, then one has
    \[\chi_{\orb}(G)=\sum_{G\sigma} (-1)^{n_\sigma} \frac{1}{\vert H^{\sigma} \vert},\] 
    where the sum runs over all $G$-orbits of cells $\sigma$ of a fixed finite $G$-CW model for $\underline{E}G$, the number $n_\sigma$ is the dimension of $\sigma$, and $H^{\sigma}$ is the stabilizer of $\sigma$.
  \end{enumerate}
\end{prop}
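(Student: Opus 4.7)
The plan is to reduce to a direct cellular computation on the quotient by a torsion-free finite-index subgroup. Choose such a subgroup $\Lambda \leq G$ (which exists by the virtual torsion-freeness assumption of Definition \ref{orbidef}), and write $\chi^G_{\orb}(X) = \chi_\mathbb{Q}(\Lambda \bs X)/[G:\Lambda]$. The main content of the proof will be to express $\chi_\mathbb{Q}(\Lambda \bs X)$ as a sum over $G$-orbits of cells of $X$.

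First I would verify that $\Lambda$ acts freely on $X$. Since $X$ is a proper $G$-CW complex, every cell stabilizer $H^\sigma \leq G$ is finite. For any $g \in G$, the intersection $\Lambda \cap g H^\sigma g^{-1}$ is a finite subgroup of the torsion-free group $\Lambda$, hence trivial. In particular, $\Lambda$ acts freely and cellularly on $X$, so the quotient $\Lambda \bs X$ inherits a natural CW structure.

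Next I would count the cells of $\Lambda \bs X$ orbit by orbit. The $G$-orbit of an equivariant $n_\sigma$-cell is equivariantly isomorphic to $G/H^\sigma \times D^{n_\sigma}$, and its $\Lambda$-orbits correspond bijectively to the double cosets $\Lambda \bs G /H^\sigma$. Since $H^\sigma$ is finite and $\Lambda$ is torsion-free, right multiplication by $H^\sigma$ on $\Lambda \bs G$ is free (the stabilizer of $\Lambda g$ is $H^\sigma \cap g^{-1}\Lambda g = 1$), so each $H^\sigma$-orbit has exactly $|H^\sigma|$ elements. This yields the orbit-counting identity
\[|\Lambda \bs G /H^\sigma| = \frac{|\Lambda \bs G|}{|H^\sigma|} = \frac{[G:\Lambda]}{|H^\sigma|}.\]
Because the $\Lambda$-action is free, each $\Lambda$-orbit of cells descends to exactly one cell in $\Lambda \bs X$ of the same dimension. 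In particular, $\Lambda \bs X$ is a finite CW complex, and the $G$-orbit $G\sigma$ contributes $[G:\Lambda]/|H^\sigma|$ cells of dimension $n_\sigma$.

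Finally I would apply the standard cellular formula for the rational Euler characteristic of a finite CW complex, obtaining
\[\chi_\mathbb{Q}(\Lambda \bs X) = \sum_{G\sigma} (-1)^{n_\sigma} \frac{[G:\Lambda]}{|H^\sigma|},\]
and dividing by $[G:\Lambda]$ proves part (i). Part (ii) is immediate: apply part (i) to $X = \un{E}G$, which is a finite proper $G$-CW complex by hypothesis. The only real subtlety is the orbit-counting identity above; everything else is bookkeeping.
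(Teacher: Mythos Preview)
Your argument is correct and is essentially the standard proof of Quillen's formula. The paper itself does not supply a proof of this proposition: it is stated with a reference to \cite[Proposition 7.3]{Brownbook} and then used as a black box. Your write-up gives exactly the kind of elementary derivation one finds in that reference---reduce to the quotient by a torsion-free finite-index subgroup, count the $\Lambda$-cells in each $G$-orbit via the double-coset identity $|\Lambda \bs G / H^\sigma| = [G:\Lambda]/|H^\sigma|$ (valid because $H^\sigma$ acts freely on $\Lambda \bs G$), and apply the cellular Euler characteristic. There is nothing to correct.
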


\section{Chromatic congruences and the proof of Theorem \ref{theorema}} 

In this section we prove the main general result of this paper. 

\begin{prop} \label{chromaticidentity} Let $G$ be a virtually torsion-free discrete group with a finite $\underline{E}G$. Then for any finite proper $G$-CW complex $X$ and $n \geq 1$, we have
\[\sum_{[g_1,\dots,g_n] \in G \bs G_{n,p}} \chi_{\orb}^{C\td{g_1, \dots, g_n}}(X^{\td{g_1, \dots, g_n}})=\sum_{G\sigma} (-1)^{n_\sigma} \frac{\vert H^{\sigma}_{n,p}\vert} {\vert H^{\sigma} \vert},\]
where the sum runs over all $G$-orbits of cells $\sigma$ of $X$, the number $n_\sigma$ is the dimension of $\sigma$, and $H^{\sigma}$ is the stabilizer of $\sigma$.

\end{prop}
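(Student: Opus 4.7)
The plan is to apply Quillen's formula (Proposition~\ref{Quillenformula}(1)) to each summand on the left and then reorganize the resulting double sum. First, Proposition~\ref{lemma: finite model for centralizers} guarantees that for every $\mathbf{g}=(g_1,\dots,g_n) \in G_{n,p}$ the space $X^{\td{\mathbf{g}}}=X^{\td{g_1,\dots,g_n}}$ is a finite proper $C\td{\mathbf{g}}$-CW complex. For each $G$-orbit of cells $\sigma$ of $X$ with stabilizer $K := H^\sigma$, the $C\td{\mathbf{g}}$-orbits of cells of $X^{\td{\mathbf{g}}}$ arising from $\sigma$ are indexed by $C\td{\mathbf{g}}\bs(G/K)^{\td{\mathbf{g}}}$, the condition $aK \in (G/K)^{\td{\mathbf{g}}}$ translates to $a^{-1}\mathbf{g}a \in K_{n,p}$, and the stabilizer of such a cell in $C\td{\mathbf{g}}$ is $C\td{\mathbf{g}} \cap aKa^{-1}$. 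Re-indexing the cells of $X^{\td{\mathbf{g}}}$ in this way, Quillen's formula gives
\[
\chi_{\orb}^{C\td{\mathbf{g}}}(X^{\td{\mathbf{g}}}) = \sum_{G\sigma} (-1)^{n_\sigma} \sum_{[aK] \in C\td{\mathbf{g}}\bs(G/K)^{\td{\mathbf{g}}}} \frac{1}{|C\td{\mathbf{g}} \cap aKa^{-1}|}.
\]

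Summing over $[\mathbf{g}] \in G\bs G_{n,p}$ and interchanging the two outer sums, it then suffices to prove, for each cell-orbit $\sigma$ with $K = H^\sigma$, the local identity
\[
\sum_{[\mathbf{g}] \in G\bs G_{n,p}}\ \sum_{[aK] \in C\td{\mathbf{g}}\bs(G/K)^{\td{\mathbf{g}}}}\ \frac{1}{|C\td{\mathbf{g}} \cap aKa^{-1}|} \;=\; \frac{|K_{n,p}|}{|K|}.
\]
To assemble the left-hand side into a single orbit sum, I would introduce the auxiliary $G$-set
\[
T := \bigl\{(\mathbf{g}, aK) \in G_{n,p} \times G/K : a^{-1}\mathbf{g}a \in K_{n,p}\bigr\}, \quad b\cdot(\mathbf{g}, aK) := (b\mathbf{g}b^{-1}, baK),
\]
whose $G$-stabilizer at $(\mathbf{g}, aK)$ is exactly $C\td{\mathbf{g}} \cap aKa^{-1}$. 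Parametrizing each $G$-orbit of $T$ by first selecting a conjugacy representative $\mathbf{g}$ identifies the double sum with $\sum_{[s]\in G\bs T} 1/|\stab_G(s)|$.

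The last step is to compute $G\bs T$ and the stabilizers via the alternative parametrization $\phi \colon G \times K_{n,p} \to T$, $(a,\mathbf{h}) \mapsto (a\mathbf{h}a^{-1}, aK)$, which induces a $G$-equivariant bijection $(G \times K_{n,p})/K \xrightarrow{\sim} T$ for the $K$-action $k \cdot (a, \mathbf{h}) := (ak^{-1}, k\mathbf{h}k^{-1})$, where $G$ acts by left translation on the $G$-factor. Dividing by the free left $G$-action yields $G\bs T \cong K \bs K_{n,p}$ with $K$ acting by conjugation. Writing $\mathbf{g} = a\mathbf{h}a^{-1}$ gives $C\td{\mathbf{g}} = aC\td{\mathbf{h}}a^{-1}$, hence $C\td{\mathbf{g}} \cap aKa^{-1} = a(C\td{\mathbf{h}}\cap K)a^{-1}$, so the $G$-stabilizer has the same order as the centralizer $C\td{\mathbf{h}} \cap K$ of $\td{\mathbf{h}}$ in $K$. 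The classical orbit-stabilizer formula in the finite group $K$ then gives
\[
\sum_{[\mathbf{h}] \in K\bs K_{n,p}} \frac{1}{|C\td{\mathbf{h}}\cap K|} = \frac{1}{|K|}\sum_{[\mathbf{h}]} |K\cdot \mathbf{h}| = \frac{|K_{n,p}|}{|K|},
\]
and summing over cell-orbits $\sigma$ completes the proof. The main obstacle is the careful bookkeeping in the reparametrization of $T$: verifying the $K$-equivariance of $\phi$, identifying $G\bs T$ with $K\bs K_{n,p}$ correctly, and matching the stabilizer orders under conjugation by $a$.
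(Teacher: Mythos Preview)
Your proof is correct and follows essentially the same route as the paper: both reduce to a single orbit $G/K$, establish the bijection $\coprod_{[\mathbf{g}]} C\td{\mathbf{g}}\bs (G/K)^{\td{\mathbf{g}}} \cong K\bs K_{n,p}$ (your auxiliary set $T$ and map $\phi$ repackage exactly the paper's bijection (\ref{tuplesbijection})), and finish with the class equation. The only cosmetic difference is that you invoke Quillen's formula (Proposition~\ref{Quillenformula}) directly to obtain the sum of reciprocal stabilizer orders, while the paper unwinds the $\Lambda$-definition of $\chi_{\orb}$ to reach the same point.
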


\begin{proof} Let $\Lambda$ be a finite index torsion-free subgroup of $G$. We will use the following observation on $G$-sets: Let $Y$ be a $G$-set such that $G \bs Y$ is finite. Then 
\begin{equation} \label{cosetequation}\vert  \Lambda \bs Y \vert = \sum_{[y] \in G \bs Y} \vert \Lambda \backslash G/G_y \vert, \end{equation}
where $G_y$ is the stabilizer of $y$. We also use that for any finite subgroup $H \leq G$, the map 
\begin{equation} \label{tuplesbijection}\coprod_{(g_1,\dots,g_n) \in G_{n,p}^{rep}} C\td{g_1, \dots, g_n} \bs (G/H)^{\td{g_1, \dots, g_n}} \to H \bs H_{n,p},\end{equation}
sending $[xH] \in C\td{g_1, \dots, g_n} \bs (G/H)^{\td{g_1, \dots, g_n}}$ to $[x^{-1}g_1x, \dots x^{-1}g_nx]$ is a bijection, where $G_{n,p}^{rep}$ is a fixed set of representatives of $G \bs G_{n,p}$. 

Both sides of the desired identity are additive functions in $X$. It suffices to check the formula for $X=G/H$, where $H$ is finite. We have (below $C=C\td{g_1, \dots, g_n}$ for short and $C_H$ denotes the centralizer in $H$)
\begin{align*}& \sum_{[g_1,\dots,g_n] \in G \bs G_{n,p}} \chi_{\orb}^{C\td{g_1, \dots, g_n})}((G/H)^{\td{g_1, \dots, g_n})})\\&= \sum_{[g_1,\dots,g_n] \in G \bs G_{n,p}} \frac{1}{[C\td{g_1, \dots, g_n}: (C\td{g_1, \dots, g_n} \cap \Lambda)]} \vert (C\td{g_1, \dots, g_n} \cap \Lambda) \bs (G/H)^{\td{g_1, \dots, g_n}} \vert\\&=\sum_{[g_1,\dots,g_n] \in G \bs G_{n,p}} \frac{1}{[C: C \cap \Lambda]} \;\;\sum_{[x] \in C \bs (G/H)^{\td{g_1, \dots, g_n}} } \vert (C \cap \Lambda)\backslash C/(C\cap xHx^{-1}) \vert \\&= \sum_{[g_1,\dots,g_n] \in G \bs G_{n,p}} \;\;\sum_{[x] \in C \bs (G/H)^{\td{g_1, \dots, g_n}}} \frac{1}{\vert C\cap xHx^{-1} \vert} \\&= \sum_{[h_1,\dots,h_n] \in H \bs H_{n,p}} \frac{1}{\vert C_H\td{h_1, \dots, h_n}\vert} =  \frac{\vert H_{n,p}\vert} {\vert H \vert}.\hspace{3cm}  \end{align*}
Here we used Equation (\ref{cosetequation}) for the second identity, the bijection (\ref{tuplesbijection}) for the fourth identity and the class equation for the last identity. \end{proof}

Recall that a rational number $a$ is called \emph{$p$-integral} if $a$ belongs to $\Z_{(p)}$, or equivalently if $a=0 \in \mathbb{Q}/\Z_{(p)}$, i.e., the congruence $a \equiv 0 \mod \Z_{(p)}$ holds. 

\begin{lemma}\label{Frobenius} Let $H$ be a finite group. Then the rational number
\[\frac{\vert H_{n,p}\vert} {\vert H \vert}\]
is $p$-integral.
\end{lemma}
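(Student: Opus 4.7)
The plan is to show the stronger integral statement that $|H|_p$ divides $|H_{n,p}|$, where $|H|_p$ denotes the $p$-part of $|H|$; this is exactly the claim that the quotient $|H_{n,p}|/|H|$ has non-negative $p$-adic valuation. I will prove this by induction on $n$, with Frobenius's theorem handling the base case and a conjugacy-class bookkeeping argument handling the inductive step.

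For the base case $n=1$, write $|H|=p^a m$ with $\gcd(p,m)=1$. An element of $H$ has $p$-power order if and only if it is killed by $p^a$ (by Lagrange). Hence $H_{1,p}=\{x\in H: x^{p^a}=1\}$, and Frobenius's theorem gives that the cardinality of this set is divisible by $\gcd(p^a,|H|)=p^a=|H|_p$, as required.

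For the inductive step, I will partition $H_{n,p}$ according to the first coordinate. Since a tuple $(g_1,\dots,g_n)$ lies in $H_{n,p}$ precisely when $g_1\in H_{1,p}$ and $(g_2,\dots,g_n)\in C_H(g_1)_{n-1,p}$, one has
\[|H_{n,p}|=\sum_{g\in H_{1,p}}|C_H(g)_{n-1,p}|=\sum_{[g]\in H\bs H_{1,p}}[H:C_H(g)]\cdot |C_H(g)_{n-1,p}|,\]
where the second equality uses that conjugate elements have isomorphic centralizers. By the inductive hypothesis applied to the finite group $C_H(g)$, the factor $|C_H(g)_{n-1,p}|$ is divisible by $|C_H(g)|_p$. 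The $p$-adic valuation of a single summand is therefore at least
\[v_p([H:C_H(g)])+v_p(|C_H(g)|)=v_p(|H|),\]
which gives $|H|_p \mid |H_{n,p}|$ and closes the induction.

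The argument is essentially a repeated application of Frobenius's theorem, amplified from $n=1$ to general $n$ via the standard orbit-stabilizer decomposition of $H$ acting by conjugation. There is no real obstacle here beyond correctly bookkeeping $p$-adic valuations in the inductive step; the key observation is that the loss $v_p(|C_H(g)|)\le v_p(|H|)$ from passing to a centralizer is compensated exactly by the index factor $[H:C_H(g)]$, so that the sum is divisible by $|H|_p$ term by term.
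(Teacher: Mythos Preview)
Your proof is correct and follows essentially the same approach as the paper: Frobenius's theorem for the base case, then an inductive coordinate-projection combined with orbit--stabilizer to reduce to centralizers. The only cosmetic difference is that you split off the first coordinate and invoke the full induction hypothesis for $n-1$, whereas the paper splits off the last coordinate (projecting onto the first $n-1$) and appeals only to the $n=1$ case; the bookkeeping is otherwise identical.
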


\begin{proof} Let $r$ be the maximal number such that $p^r$ divides $\vert H \vert$. Then $\vert H \vert=p^rk$, where $\gcd(k,p)=1$ and 
\[H_{1,p}=\{x \in H \;\; \vert \;\; x^{p^r}=1\}.\]
By the Frobenius theorem \cite{Frobenius}, the cardinality $\vert H_{1,p} \vert$ is divisible by $p^r$. We have
\[\frac{\vert H_{1,p}\vert} {\vert H \vert}=\frac{\vert H_{1,p}\vert} {p^r} \cdot \frac{1} {k},\]
where $\vert H_{1,p}\vert / {p^r}$ is an integer and $1/k$ is $p$-integral. This completes the proof for the case $n=1$. 

Now let $n \geq 2$. We have the following chain of identities:
\begin{align*}&\frac{\vert H_{n,p}\vert} {\vert H \vert}=\sum_{(h_1, \dots, h_{n-1}) \in H_{n-1,p} } \frac{\vert {C\td{h_1, \dots, h_{n-1}}}_{1,p}\vert} {\vert H \vert}  \\ &= \sum_{(h_1, \dots, h_{n-1}) \in H_{n-1,p} } \frac{\vert {C\td{h_1, \dots, h_{n-1}}}_{1,p}\vert} {\vert C\td{h_1, \dots, h_{n-1}} \vert} \cdot \frac{\vert C\td{h_1, \dots, h_{n-1}} \vert}{\vert H \vert} \\&=
\sum_{[h_1, \dots, h_{n-1}] \in H \bs H_{n-1,p}} \frac{\vert {C\td{h_1, \dots, h_{n-1}}}_{1,p}\vert} {\vert C\td{h_1, \dots, h_{n-1}} \vert}. \end{align*}
By the previous paragraph the terms in the last sum are $p$-integral which completes the proof. \end{proof}

\begin{example} In general the numerator of $\frac{\vert H_{n,p}\vert} {\vert H \vert}$ might not be divisible by $p$. For example, let $H=\Sigma_3$, $p=2$, and $n=2$. Then
$\frac{\vert H_{2,2}\vert} {\vert H \vert}=\frac{5}{3}$. 
\end{example}

We are now ready to prove Theorem \ref{theorema} about the chromatic congruence: 

\begin{theorem} \label{maintheorem} Let $G$ be a virtually torsion-free discrete group with a finite $\underline{E}G$. Then for any finite proper $G$-CW complex $X$ and $n \geq 1$, we have
\[\sum_{[g_1,\dots,g_n] \in G \bs G_{n,p}} \chi_{\orb}^{C\td{g_1, \dots, g_n}}(X^{\td{g_1, \dots, g_n}}) \equiv 0 \mod \Z_{(p)}.\]
In particular, if we take $X=\underline{E}G$, we get
\[\sum_{[g_1,\dots,g_n] \in G \bs G_{n,p}} \chi_{\orb}(C\td{g_1, \dots, g_n}) \equiv 0 \mod \Z_{(p)}.\]
\end{theorem}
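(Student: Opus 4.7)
The plan is to combine the two preparatory results that have already been established. First, I would apply Proposition \ref{chromaticidentity} to rewrite the left hand side of the desired congruence as the finite alternating sum
\[\sum_{G\sigma}(-1)^{n_\sigma}\frac{|H^{\sigma}_{n,p}|}{|H^{\sigma}|},\]
indexed by $G$-orbits of cells of $X$. Because $X$ is a proper $G$-CW complex, each stabilizer $H^{\sigma}$ is a finite group, so Lemma \ref{Frobenius} applies to every term and yields $|H^{\sigma}_{n,p}|/|H^{\sigma}|\in\Z_{(p)}$. A finite $\mathbb{Z}$-linear combination of $p$-integral rationals is again $p$-integral, so the whole sum lies in $\Z_{(p)}$; equivalently, it vanishes modulo $\Z_{(p)}$. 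This disposes of the general statement for an arbitrary finite proper $G$-CW complex $X$.

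Next, for the specialization, I would take $X=\underline{E}G$, which is a finite proper $G$-CW complex by hypothesis. The subgroup $\td{g_1,\ldots,g_n}$ is finite: it is generated by finitely many pairwise commuting elements each of $p$-power order, hence is a finite abelian $p$-group, being a quotient of a product $\prod_i\Z/p^{k_i}$. Corollary \ref{Normalizer and Centralizer} then identifies $\underline{E}G^{\td{g_1,\ldots,g_n}}$ with a finite model of $\underline{E}C\td{g_1,\ldots,g_n}$, and by Definition \ref{orbidef} this gives
\[\chi^{C\td{g_1,\ldots,g_n}}_{\orb}\bigl(\underline{E}G^{\td{g_1,\ldots,g_n}}\bigr)=\chi_{\orb}(C\td{g_1,\ldots,g_n}).\]
Substituting this identification termwise into the general congruence yields the chromatic congruence claimed in Theorem \ref{theorema}.

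There is no substantive obstacle at this stage, because the theorem is essentially a corollary of Proposition \ref{chromaticidentity} and Lemma \ref{Frobenius}; the only thing I need to verify carefully is that the subgroups $\td{g_1,\ldots,g_n}$ appearing on the left hand side are finite, so that Corollary \ref{Normalizer and Centralizer} is legitimately applicable in the specialization step. The real mathematical content lies in the two preparatory results: the cell-by-cell bookkeeping together with the bijection (\ref{tuplesbijection}) used in Proposition \ref{chromaticidentity}, and the classical Frobenius theorem on the number of solutions of $x^{p^r}=1$ in a finite group, which powers Lemma \ref{Frobenius}.
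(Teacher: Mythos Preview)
Your proposal is correct and follows exactly the paper's own proof: combine Proposition~\ref{chromaticidentity} with Lemma~\ref{Frobenius} for the general statement, then invoke Corollary~\ref{Normalizer and Centralizer} to identify $\underline{E}G^{\td{g_1,\ldots,g_n}}$ with $\underline{E}C\td{g_1,\ldots,g_n}$ for the specialization. The extra care you take in verifying that $\td{g_1,\ldots,g_n}$ is a finite abelian $p$-group is a welcome detail, but otherwise the argument is identical.
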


\begin{proof} This follows by combining Proposition \ref{chromaticidentity} and Lemma \ref{Frobenius}. The second claim follows since if $X=\underline{E}G$, then $X^{\td{g_1, \dots, g_n}}=\underline{E}C\td{g_1, \dots, g_n}$ by Corollary \ref{Normalizer and Centralizer}. \end{proof}

\begin{rem} Proposition \ref{chromaticidentity} and Theorem \ref{theorema} are related to the decomposition of the iterated $p$-adic free loop space due to Lurie \cite[Example 3.4.5]{Elliptic} and \cite{LPS}. We do not need this result in the current paper but since it offers an alternative way to prove Theorem \ref{theorema}, we still review it. Let $B_{\gl}G$ denote the global classifying orbispace of $G$ and $(B_{\gl}G)^{\wedge}_{n,p}$ the iterated $p$-adic free loop space of  $B_{\gl}G$. If $G$ admits a finite $\un{E}G$, then one has a splitting of orbispaces
\[  \coprod_{[g_1, \dots, g_n] \in G \bs G_{n,p}} B_{\gl}C\td{g_1, \dots, g_n}\ \xra{\ \simeq \ } \ (B_{\gl}G)^{\wedge}_{n,p}. \  \] 
For finite groups, this follows from \cite[Example 3.4.5]{Elliptic} and, for the general case, from the forthcoming paper \cite{LPS}. By applying to the splitting the orbispace Euler characteristic (see \cite{LPS}), we get 
\[\chi_{\orb}[(B_{\gl}G)^{\wedge}_{n,p}] = \sum_{[g_1,\dots,g_n] \in G \bs G_{n,p}} \chi_{\orb}[B_{\gl}C\td{g_1, \dots, g_n}]=\sum_{[g_1,\dots,g_n] \in G \bs G_{n,p}} \chi_{\orb}(C\td{g_1, \dots, g_n}).\]
Using this and the class equation, for any finite group $H$, one obtains
\[\chi_{\orb}[(B_{\gl}H)^{\wedge}_{n,p}]=\sum_{[h_1,\dots, h_n] \in H \bs H_{n,p}} \frac{1}{\vert C\td{h_1, \dots, h_n}\vert} =  \frac{\vert H_{n,p}\vert} {\vert H\vert}.\]
On the other hand, since the iterated $p$-adic free loop space commutes with colimits, we also get
\[\chi_{\orb} [(B_{\gl}G)^{\wedge}_{n,p}]=\sum_{G\sigma} (-1)^{n_\sigma} \frac{\vert H^{\sigma}_{n,p}\vert} {\vert H^{\sigma} \vert},\]
where the sum runs over all $G$-orbits of cells $\sigma$ of $\un{E}G$, the number $n_\sigma$ is the dimension of $\sigma$, and $H^{\sigma}$ is the stabilizer of $\sigma$. Combining these results, we get an alternative proof of the identity 
\[ \sum_{[g_1,\dots,g_n] \in G \bs G_{n,p}} \chi_{\orb}(C\td{g_1, \dots, g_n})=\sum_{G\sigma} (-1)^{n_\sigma} \frac{\vert H^{\sigma}_{n,p}\vert} {\vert H^{\sigma} \vert}.\]
Since we want to keep the exposition simple, we will not use orbispaces, $p$-adic free loop spaces, and the orbispace Euler characteristic in this paper. Interested readers are referred to the forthcoming paper \cite{LPS} and \cite{Elliptic}. 

\end{rem}

\section{Generating tuples and the proof of Theorem \ref{theoremb}}

In this section we prove Theorem \ref{theoremb}. The strategy is to use generating tuples. Let $H$ be a finite abelian subgroup of $G$. Define
\[\Gen_n(H)=\{ (h_1, \dots, h_n) \in H^{\times n} \; \vert \; \td{h_1, \dots, h_n}=H\}.\]
In other words, $\Gen_n(H)$ is the set of generating $n$-tuples of $H$. The Weyl group $W(H)=N(H)/C(H)$ acts freely on $\Gen_n(H)$ by conjugation in each coordinate. 

\begin{lemma} \label{generating tuples formula} Let $G$ be a virtually torsion-free discrete group with a finite $\underline{E}G$. Then we have
\[\sum_{[g_1,\dots,g_n] \in G \bs G_{n,p}} \chi_{\orb}(C\td{g_1, \dots, g_n}) =\sum_{(H)} \vert \Gen_n(H) \vert \chi_{\orb} (N(H)),\]
where the right hand sum runs over the conjugacy classes of finite abelian $p$-subgroups. 
\end{lemma}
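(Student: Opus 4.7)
The plan is to reorganize the left-hand sum by the subgroup generated by each tuple. Every commuting $n$-tuple of $p$-power order elements generates a finite abelian $p$-subgroup, which yields a $G$-equivariant decomposition
\begin{equation*}
G_{n,p} \;=\; \coprod_{H} \Gen_n(H),
\end{equation*}
where $H$ ranges over all finite abelian $p$-subgroups of $G$ (subgroups that cannot be generated by $n$ elements contribute the empty summand), and conjugation by $g\in G$ sends $\Gen_n(H)$ bijectively to $\Gen_n(gHg^{-1})$. Hence, after fixing representatives $(H)$ of conjugacy classes, the $G$-orbits on $G_{n,p}$ correspond bijectively to $\coprod_{(H)} N(H) \bs \Gen_n(H)$, and the centralizer of any representative $(g_1,\dots,g_n)\in \Gen_n(H)$ is just $C(H)$.

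Next I would analyze the $N(H)$-action on $\Gen_n(H)$. Because $C(H)$ fixes every element of $H$, conjugation factors through the finite Weyl group $W(H)=N(H)/C(H)$, and this action is \emph{free}: if $x\in N(H)$ stabilizes $(g_1,\dots,g_n)\in \Gen_n(H)$, then $x$ centralizes $\td{g_1,\dots,g_n} = H$, forcing $x\in C(H)$. Therefore $|N(H)\bs \Gen_n(H)| = |\Gen_n(H)|/|W(H)|$, and collecting contributions gives
\begin{equation*}
\sum_{[g_1,\dots,g_n] \in G\bs G_{n,p}} \chi_{\orb}(C\td{g_1,\dots,g_n}) \;=\; \sum_{(H)} \frac{|\Gen_n(H)|}{|W(H)|} \, \chi_{\orb}(C(H)).
\end{equation*}

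The final ingredient is the identity $\chi_{\orb}(C(H)) = |W(H)|\cdot \chi_{\orb}(N(H))$, which immediately converts the above into the desired right-hand side. By Corollary \ref{Normalizer and Centralizer}, both $N(H)$ and $C(H)$ admit $\un{E}G^H$ as a finite model. I would fix a torsion-free finite index subgroup $\Lambda \leq N(H)$, so that $\Lambda \cap C(H)$ is torsion-free of finite index in $C(H)$, and observe that $(\Lambda\cap C(H))\bs \un{E}G^H$ is a finite cover of $\Lambda \bs \un{E}G^H$ of degree $[\Lambda:\Lambda\cap C(H)]$. Multiplicativity of the classical Euler characteristic under finite covers, together with Definition \ref{orbidef} and the elementary identity $[N(H):\Lambda]\cdot[\Lambda:\Lambda\cap C(H)] = [N(H):C(H)]\cdot[C(H):\Lambda\cap C(H)]$, then yields the relation. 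I do not expect a serious obstacle: the whole argument is a combinatorial reindexing, and the finite-index multiplicativity of $\chi_{\orb}$ is essentially built into Wall's definition; the only point requiring genuine care is verifying that the $W(H)$-action on $\Gen_n(H)$ is free, which is what makes the division $|\Gen_n(H)|/|W(H)|$ valid.
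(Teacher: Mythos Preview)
Your proposal is correct and follows essentially the same approach as the paper: decompose $G_{n,p}$ as $\coprod_H \Gen_n(H)$, pass to $G$-orbits to get $\coprod_{(H)} W(H)\bs \Gen_n(H)$, use freeness of the $W(H)$-action together with $\chi_{\orb}(C(H)) = |W(H)|\,\chi_{\orb}(N(H))$. The only difference is that you supply a detailed justification of the last identity via multiplicativity of Euler characteristics under finite covers, whereas the paper simply invokes it.
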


\begin{proof} 
It follows by definition that the set $G_{n,p}$ admits a decomposition 
\[\coprod_{\tiny H \text{ab}. p\text{-subg.}} \Gen_n(H) = G_{n,p},\]
where the disjoint union runs over all finite abelian $p$-subgroups. The $G$-action on the right hand side corresponds to a $G$-action on the left hand side that is given by conjugating tuples and permuting summands via conjugation of subgroups. Passing to the $G$-quotients gives a bijection
\[\coprod_{(H)} W(H)\bs \Gen_n(H) \cong G \bs G_{n,p},\]
where the disjoint union runs over the conjugacy classes of finite abelian $p$-subgroups. This implies 
\[\sum_{[g_1,\dots,g_n] \in G \bs G_{n,p}} \chi_{\orb}(C\td{g_1, \dots, g_n})=\sum_{(H)} \vert W(H)\bs \Gen_n(H) \vert \chi_{\orb} (C(H)).\]
Since $W(H)$ acts freely on $\Gen_n(H)$ and $\chi_{\orb}(N(H))=\frac{1}{\vert W(H) \vert} \chi_{\orb} (C(H))$, we get the desired equation. \end{proof}

Let $H$ be a finite abelian $p$-group. The Frattini subgroup $\Phi(H)$ is the smallest subgroup such that $H/\Phi(H)$ is elementary abelian. 
Note that $\Phi(H)=1$ if and only if $H$ is an elementary abelian $p$-group. For $E$ an elementary abelian $p$-group, the rank of $E$ is denoted by $r(E)$. We also recall the definition of the $p$-binomial coefficients:
\[\binom{B}{i}_p=\frac{(p^B-1) \dots (p^{B-i+1}-1) }{(p^i-1) \dots (p-1)}.\]
In particular, $\binom{B}{B}_p=1$ and  $\binom{B}{0}_p=1$. We are now ready to prove Theorem \ref{theoremb}. 

\begin{theorem} \label{generating tuples and congruences}  Let $G$ be a virtually torsion-free discrete group with a finite $\underline{E}G$. Then we have
\begin{align*}\sum_{[g_1,\dots,g_n] \in G \bs G_{n,p}} \chi_{\orb}(C\td{g_1, \dots, g_n})\hspace{5cm}\\= \sum_{(H)} \chi_{\orb} (N(H)) \cdot \vert  \Phi(H) \vert^n \cdot \big ( \sum_{i=0}^{r(H/\Phi(H))} (-1)^i p^{n(r(H/\Phi(H))-i)+\binom{i}{2}} \cdot \binom{r(H/\Phi(H))}{i}_p  \big ), \end{align*}
where the sum runs over the conjugacy classes of finite abelian $p$-subgroups. Furthermore, we have the congruence
\[\sum_{(H)} \chi_{\orb} (N(H)) \cdot  \vert \Phi(H)\vert^n \cdot \big ( \sum_{i=0}^{r(H/\Phi(H))} (-1)^i p^{n(r(H/\Phi(H))-i)+\binom{i}{2}} \cdot \binom{r(H/\Phi(H))}{i}_p  \big) \equiv 0 \mod \Z_{(p)}.\]
\end{theorem}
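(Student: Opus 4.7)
My plan is to start from Lemma \ref{generating tuples formula}, which already rewrites the left-hand side of the desired identity as $\sum_{(H)} |\Gen_n(H)| \cdot \chi_{\orb}(N(H))$, with the sum ranging over conjugacy classes of finite abelian $p$-subgroups $H \leq G$. The whole theorem then reduces to expressing the cardinality $|\Gen_n(H)|$ in closed form and matching it with the parenthetical factor in the statement. Once the identity is established, the congruence modulo $\Z_{(p)}$ is immediate by combining it with Theorem \ref{maintheorem} applied to $X = \un{E}G$, which guarantees that the sum on the left is $p$-integral.

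To compute $|\Gen_n(H)|$, I would first invoke Burnside's basis theorem. An $n$-tuple $(h_1,\dots,h_n)$ generates the finite abelian $p$-group $H$ if and only if its image in the elementary abelian quotient $H/\Phi(H)$ generates $H/\Phi(H)$. Since the surjection $H \twoheadrightarrow H/\Phi(H)$ has fibers of size $|\Phi(H)|$, each generating $n$-tuple downstairs lifts to exactly $|\Phi(H)|^n$ generating $n$-tuples upstairs, so
\[|\Gen_n(H)| = |\Phi(H)|^n \cdot |\Gen_n(H/\Phi(H))|.\]
This reduces the count to the elementary abelian case $E = H/\Phi(H) \cong (\Z/p)^r$ with $r = r(H/\Phi(H))$.

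To count $|\Gen_n(E)|$, I would use Möbius inversion on the lattice $L$ of $\F_p$-subspaces of $E$. Any $n$-tuple in $E$ generates a unique subspace, giving the identity $|W|^n = \sum_{V \leq W} |\Gen_n(V)|$ for every $W \leq E$. Applying Möbius inversion with Hall's classical formula $\mu(V,W) = (-1)^d p^{\binom{d}{2}}$ for $d = \dim W - \dim V$ (see \cite{Hall}), together with the fact that $E$ has exactly $\binom{r}{i}_p$ subspaces of codimension $i$, yields
\[|\Gen_n(E)| = \sum_{i=0}^{r} (-1)^i p^{n(r-i) + \binom{i}{2}} \binom{r}{i}_p.\]
Multiplying by $|\Phi(H)|^n$ and substituting into Lemma \ref{generating tuples formula} produces exactly the right-hand side of the claimed identity, and the congruence then follows at once from Theorem \ref{maintheorem}.

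The main technical obstacle is the Möbius computation in the third step; everything else is largely bookkeeping. However, since Hall's evaluation of the Möbius function of the subspace lattice is classical and is already cited in the passage motivating Theorem \ref{theoremb}, this step should slot in cleanly. A minor care-point will be tracking the index conventions (codimension $i$ versus dimension $r-i$) so the power of $p$ in the output matches the displayed exponent $n(r(H/\Phi(H))-i) + \binom{i}{2}$ exactly.
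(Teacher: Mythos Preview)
Your proposal is correct and follows the same route as the paper: invoke Lemma \ref{generating tuples formula}, plug in the Hall formula for $\vert \Gen_n(H)\vert$, and then appeal to Theorem \ref{maintheorem} (equivalently Theorem \ref{theorema}) for the congruence. The only difference is that you spell out the derivation of the Hall count via the Burnside basis theorem and M\"obius inversion on the subspace lattice, whereas the paper simply cites \cite{Hall} and \cite[Section 6.B]{Dia} for this formula.
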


\begin{proof} Using \cite[Section 6.B]{Dia} and \cite{Hall}, we know the cardinality of the set of generating $n$-tuples:
\[\vert \Gen_n(H) \vert= \vert \Phi(H) \vert^n \cdot \big ( \sum_{i=0}^{r(H/\Phi(H))} (-1)^i p^{n(r(H/\Phi(H))-i)+\binom{i}{2}} \cdot \binom{r(H/\Phi(H))}{i}_p  \big). \]
By Lemma \ref{generating tuples formula}, we get the first claim. The congruence now follows from the first claim and Theorem \ref{theorema}. \end{proof}

\begin{rem} \label{max card} The formula of Theorem \ref{generating tuples and congruences} can be slightly simplified. In fact it suffices to take the sum indexed over conjugacy classes of those finite abelian $p$-subgroups which are generated by at most $n$ elements. Other terms in the sum will automatically vanish. For example, if $n=1$, then we only need to consider cyclic $p$-subgroups. This agrees with Theorem \ref{theoremb}. 
\end{rem}

\begin{rem} It is sometimes convenient to rewrite the congruences of Theorem \ref{theorema} and Theorem \ref{theoremb} as follows:
\[\chi_{\orb}(G) \equiv \sum_{[g_1,\dots,g_n] \in G \bs G_{n,p}-{\{[1,\dots,1]\}}} -\chi_{\orb}(C\td{g_1, \dots, g_n}) \mod \Z_{(p)}\]
and 
\begin{align*}\chi_{\orb}(G) \equiv  \sum_{(H), H \neq 1} \chi_{\orb} (N(H)) \cdot  \vert \Phi(H) \vert^n \cdot \big ( \sum_{i=0}^{r(H/\Phi(H))} (-1)^{i+1} p^{n(r(H/\Phi(H))-i)+\binom{i}{2}} \cdot \binom{r(H/\Phi(H))}{i}_p  \big) \\ \mod \Z_{(p)}.\end{align*} 
\end{rem}

\section{The congruence at height $n=\infty$ and a new proof of the Brown-Quillen congruence} 

In this section we give a new proof of the following theorem of Brown and Quillen \cite[Theorem 14.2]{Brownbook}:

\begin{theorem} \label{Brown-Quillen}  Let $G$ be a virtually torsion-free discrete group with a finite $\underline{E}G$. Then one has
\[ \sum_{(E)} (-1)^{r(E)}p^{\binom{r(E)}{2}}\chi_{\orb}(N(E)) \equiv 0 \mod \Z_{(p)},\]
where the sum runs over the conjugacy classes of elementary abelian $p$-subgroups.
\end{theorem}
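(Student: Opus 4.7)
The plan is to deduce Theorem \ref{Brown-Quillen} by taking the $p$-adic limit as $n\to\infty$ of the family of congruences provided by Theorem \ref{generating tuples and congruences}. Denote by $S_n$ the left-hand side of that congruence and by $S_\infty=\sum_{(E)}(-1)^{r(E)}p^{\binom{r(E)}{2}}\chi_{\orb}(N(E))$ the Brown-Quillen sum. The key claim is that $v_p(S_n-S_\infty)\to\infty$ as $n\to\infty$. Once this is established, the theorem follows at once: each $S_n$ lies in $\Z_{(p)}$, so if one had $v_p(S_\infty)<0$, then for $n$ large enough $v_p(S_n)=v_p(S_\infty)<0$, contradicting $S_n\in\Z_{(p)}$.

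To establish the $p$-adic convergence, I would split the sum defining $S_n$ according to whether the indexing abelian $p$-subgroup $H$ is elementary abelian. For an elementary abelian $E$, $\Phi(E)=1$ and the bracketed inner sum in Theorem \ref{generating tuples and congruences} reduces to
\[\sum_{i=0}^{r(E)}(-1)^i p^{n(r(E)-i)+\binom{i}{2}}\binom{r(E)}{i}_p.\]
The top term $i=r(E)$ contributes exactly $(-1)^{r(E)}p^{\binom{r(E)}{2}}$, while every term with $i<r(E)$ has $p$-adic valuation at least $n$. Hence the elementary contribution to $S_n$ differs from $S_\infty$ by a quantity of $p$-adic valuation at least $n-M$, where $M$ is a fixed constant bounding the $p$-adic denominators of the coefficients $\chi_{\orb}(N(E))$ as $E$ varies over the (finitely many) conjugacy classes of elementary abelian $p$-subgroups.

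For a non-elementary abelian $p$-subgroup $H$, one has $|\Phi(H)|=p^{k(H)}$ with $k(H)\geq 1$, and the bracketed inner sum, being a sum of integers, is itself an integer. The $H$-term in $S_n$ is therefore $\chi_{\orb}(N(H))\cdot p^{n\,k(H)}$ times an integer. Since $G$ admits a finite $\un{E}G$, every finite subgroup of $G$ is, up to conjugation, contained in one of the finitely many cell stabilizers of $\un{E}G$; as each of these is itself finite, there are only finitely many conjugacy classes of finite subgroups of $G$, and in particular the denominators of the $\chi_{\orb}(N(H))$ appearing in $S_n$ are uniformly bounded independently of $n$. Combined with the factor $p^{n\,k(H)}$, the non-elementary contribution to $S_n$ thus has $p$-adic valuation tending to $+\infty$. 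Together with the elementary estimate this gives $v_p(S_n-S_\infty)\to\infty$ and completes the argument. The main (and essentially only) subtlety is this uniform denominator bound; apart from that the proof is a clean passage to the $p$-adic limit, since Hall's formula (via Theorem \ref{generating tuples and congruences}) has already packaged the combinatorics in the right form.
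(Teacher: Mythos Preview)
Your proposal is correct and follows essentially the same approach as the paper: both deduce the Brown--Quillen congruence by taking the $p$-adic limit of the height-$n$ congruences of Theorem \ref{generating tuples and congruences}, observing that only the $i=r(E)$ terms for elementary abelian $E$ survive. Your treatment is slightly more explicit about the uniform denominator bound (via the finiteness of conjugacy classes of finite subgroups coming from a finite $\un{E}G$), which the paper handles implicitly by interchanging the finite sum with the limit.
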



\begin{proof} For $H=\prod_{i=0}^B \Z/p^{\lambda_i}$ we have: $\Phi(H)=\prod_{i=1}^B \Z/p^{\lambda_i-1}$, $B=r(H/\Phi(H))$, and $ \vert \Phi(H)\vert=p^{A-B}$, where $A=\sum_{i=0}^B \lambda_i$. We keep in mind that $A$ and $B$ depend on $H$. Then by Theorem \ref{generating tuples and congruences}, we have the congruence
\[\sum_{(H)} \chi_{\orb} (N(H)) \cdot  p^{n(A-B)} \cdot \big ( \sum_{i=0}^{B} (-1)^i p^{n(B-i)+\binom{i}{2}} \cdot \binom{B}{i}_p  \big) \equiv 0 \mod \Z_{(p)}.\]
Let $a_n$ denote the left hand side of this congruence. Then we know $a_n \in \Z_{(p)}$. By taking the $p$-adic limit, we get
\begin{align*}&\lim _{n\to \infty}a_n=\sum_{(H)} \chi_{\orb} (N(H)) \cdot  \lim_{n\to \infty}(p^{n(A-B)} \cdot \big ( \sum_{i=0}^{B} (-1)^i p^{n(B-i)+\binom{i}{2}} \cdot \binom{B}{i}_p  \big)) \\ &= \sum_{(H)} \chi_{\orb} (N(H)) \cdot  \lim_{n\to \infty}p^{n(A-B)} \cdot \big ( \sum_{i=0}^{B} (-1)^i \lim_{n \to \infty} p^{n(B-i)+\binom{i}{2}} \cdot \binom{B}{i}_p  \big).\end{align*}
The terms with $A>B$ or $B>i$ vanish since $\lim_{n \to \infty} p^n=0$ in $\Z_p$. An abelian $p$-subgroup is elementary abelian if and only if $A=B$, i.e., when the Frattini subgroup $\Phi(H)$ is trivial. Hence only $A=B$ and $i=B$ terms survive in the limit and we obtain
\[\lim_{n \to \infty} a_n=\sum_{(E)} (-1)^{r(E)}p^{\binom{r(E)}{2}}\chi_{\orb}(N(E)),\]
where the sum runs over the conjugacy classes of elementary abelian $p$-subgroups. The latter is a rational number, so we know $\lim_{n \to \infty} a_n \in \mathbb{Q}$. On the other hand $\Z_p$ is closed in $\mathbb{Q}_p$ and since $a_n \in \Z_{(p)} \leq \Z_p$, we know that the limit $\lim_{n \to \infty} a_n$ also belongs to $\Z_p$. Hence 
$\lim_{n \to \infty} a_n$ belongs to the intersection $\mathbb{Q} \cap \Z_p= \Z_{(p)}$ and we obtain the desired congruence. 
\end{proof}

\begin{example} Suppose $G$ satisfies the assumptions of Theorem \ref{Brown-Quillen} and additionally assume that every elementary abelian $p$-subgroup of $G$ has the rank at most $1$. Then by Lemma \ref{generating tuples formula} (see also \cite[Corollary 13.5]{Brownbook}), one has
\[\sum_{(P)} \chi_{\orb}(N(P))=\frac{1}{p-1} \sum_{[g]} \chi_{\orb}(C\td{g}),\]
where $P$ runs over the conjugacy classes of order $p$ subgroups and $g$ runs over the conjugacy classes of order $p$ elements. This shows that Brown's congruence mentioned in the introduction is a special case of the Brown-Quillen congruence. \end{example}

\begin{rem} \label{limit attained} Let $G$ be a virtually torsion-free discrete group with a finite $\underline{E}G$ and suppose $p^N$ is the maximal cardinality of a $p$-subgroup. Then the Brown-Quillen congruence can be recovered from the chromatic congruence at height $N$. In other words, the limit congruence of Theorem \ref{Brown-Quillen} is attained for sufficiently large $N$. Indeed, by Theorem \ref{theoremb} it suffices to show that the term 
\[ \chi_{\orb} (N(H)) \cdot  \vert \Phi(H) \vert^N \cdot p^{N(r(H/\Phi(H))-i)+\binom{i}{2}} \cdot \binom{r(H/\Phi(H))}{i}_p  \]
is $p$-integral if $H$ is not elementary abelian or $i<r(H/\Phi(H))$. But this follows since $p$-binomial coefficients are $p$-integral and $p^N \chi_{\orb} (N(H))$ is $p$-integral by Quillen's formula (Proposition \ref{Quillenformula}). The remaining terms in the congruence of Theorem \ref{theoremb} recover the Brown-Quillen congruence. \end{rem}

\section{An application: Mapping class groups and congruences for Bernoulli numbers}

In this section we apply Theorem \ref{theorema} to the mapping class group $\Gamma_u$ of a closed oriented surface $S_u$ of genus $u$ and recover special cases of Kummer's congruence \cite{Kum, Vor}, Carlitz's congruence \cite{Carl} and a lesser known version of Kummer's congruence for numbers divisible by $p-1$ \cite[Proposition 11.4.4]{Cohen} 

By \cite{HZ}, one has 
\[\chi_{\orb}(\Gamma_u)=\frac{\zeta(1-2u)}{2-2u}=\frac{B_{2u}}{2u(2u-2)},\]
where $\zeta$ is the Riemann zeta function and $B_{2u}$ is the $2u$-th Bernoulli number. Let $\Gamma_u^s$ denote the mapping class group of a closed oriented surface of genus $u$ with $s$ many marked points (in particular, one has $\Gamma_u^0=\Gamma_u$). We need the following formulas from \cite{HZ}:
\[\chi_{\orb}(\Gamma^s_0) = \begin{cases} 1  \;\;\;\; s \leq 3 \\ (-1)^{s-3}(s-3)! \;\;\;\; s \geq 3, 
\end{cases} \]
\[\chi_{\orb}(\Gamma^s_1) = \begin{cases} -\frac{1}{12}  \;\;\;\; s \leq 1 \\ \frac{(-1)^{s}(s-1)!}{12} \;\;\;\; s \geq 1, 
\end{cases} \]
\[\chi_{\orb}(\Gamma^s_u)=(-1)^s\frac{(2u+s-3)!}{2u(2u-2)!}B_{2u} \;\;\;\;  u \geq 2, \;\;\;\; s \geq 0. \]
By \cite{Broughton, Har, Mislin, JiWol}, the group $\Gamma_u^s$ is virtually torsion free and admits a finite $\underline{E}G$. Brown's theorem \cite{KBro2} tells us that for any virtually torsion-free group $G$ with a finite $\underline{E}G$, the equation holds
\[\chi_{\mathbb{Q}}(G)=\sum_{[g]} \chi_{\orb} (C\td{g}),\]
where $\chi_{\mathbb{Q}}(G)$ is the classical Euler characteristic (and hence an integer) and the sum runs over the conjugacy classes of finite order elements of $G$. Harer and Zagier use this equation in \cite[Theorem 5]{HZ} to compute $\chi_{\mathbb{Q}}(\Gamma_u)$ explicitly:
\[\chi_{\mathbb{Q}}(\Gamma_u)=\sum_{\tiny{\begin{aligned} k \geq 1 ,\;\;v \geq 0,\;\;s \geq 0 \hspace{0.75cm} \\ (l_1,\dots,l_s), \;\; l_i \; \vert \; k, \;\; l_i \neq k \hspace{0.5cm}  \\2u-2=k(2v-2+s)-l_1-\dots -l_s  \end{aligned}}} \frac{1}{k}\frac{\chi_{\orb}(\Gamma_v^s)}{s!}k^{2v}\prod_{\tiny{\begin{aligned} &q \; \vert \; \gcd(l_1, \dots l_s)\\ &q \; \vert \; k, q \; \text{prime}  \end{aligned} }} (1-q^{-2v})N(k;l_1,\dots,l_s),\]
where $N(k;l_1,\dots,l_s)$ is the cardinality of the set
\[\{ (r_1, \dots r_s) \in (\Z/k\Z)^{\times s} \;\; \vert \;\; r_1 +\dots+r_s \equiv 0 \mod k, \; \gcd(r_i,k)=l_i\}.\]
The number $k$ stands for the order of elements. In this formula, the sum of the terms with a fixed $k \geq 0$ is equal to the sum $\sum_{[g]} \chi_{\orb} (C\td{g})$, where $[g]$ runs over the conjugacy classes of order $k$ elements. The equation 
\[2u-2=k(2v-2+s)-l_1-\dots -l_s \]
is the Riemann-Hurwitz formula for a branched cover associated to an order $k$ element of $\Gamma_u$ with $s$ many singular points. This cover can be constructed using the Nielsen realization \cite{Nielsen}: Given an element $g$ of order $k$ in $\Gamma_u$, the mapping class $g$ is represented by a periodic homeomorphism $f$ of order $k$. The associated branched cover is the map $S_u \to S_u/\td{f}$. The $i$-th singularity has type $k/l_i$ and the expression 
\[k^{2v}\prod_{\tiny{\begin{aligned} & q \; \vert \; \gcd(l_1, \dots l_s)\\ & q \; \vert \; k, q \; \text{prime} \end{aligned} }} (1-q^{-2v})\] 
calculates the number of surjective characters determining the cover on the non-singular part. 

Let $p$ be a prime. If we only allow $k$ to run through the powers of $p$, using the Harer-Zagier formula above and Theorem \ref{theorema} at height $n=1$, we obtain

\begin{prop} \label{p-localHZ} For any $u \geq 0$, we have the congruence
\begin{align*}\chi_{\orb}(\Gamma_u)+\sum_{\tiny{\begin{aligned} m \geq 1 ,\;\;v \geq 0,\;\;s \geq 0 \hspace{0.75cm} \\ (m_1,\dots,m_s), \;\; 0 \leq m_i < m, \;\; \forall m_i \neq 0 \hspace{0.5cm}  \\2u-2=p^m(2v-2+s)-p^{m_1}-\dots -p^{m_s}  \end{aligned}}} \frac{1}{p^m}\frac{\chi_{\orb}(\Gamma_v^s)}{s!}p^{2mv}(1-p^{-2v})N(p^m;p^{m_1},\dots,p^{m_s})\\+\sum_{\tiny{\begin{aligned} m \geq 1 ,\;\;v \geq 0,\;\;s \geq 0 \hspace{0.75cm} \\ (m_1,\dots,m_s), \;\; 0 \leq m_i < m, \;\; \exists m_i = 0 \hspace{0.5cm}  \\2u-2=p^m(2v-2+s)-p^{m_1}-\dots -p^{m_s}  \end{aligned}}} \frac{1}{p^m}\frac{\chi_{\orb}(\Gamma_v^s)}{s!}p^{2mv}N(p^m;p^{m_1},\dots,p^{m_s}) \equiv 0 \mod \Z_{(p)}.\end{align*}\qed \end{prop}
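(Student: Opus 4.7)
The plan is to apply Theorem~\ref{theorema} at height $n=1$ to $G=\Gamma_u$. By the results \cite{Broughton, Har, Mislin, JiWol} recalled earlier, $\Gamma_u$ is virtually torsion-free and admits a finite $\un{E}\Gamma_u$, so the hypotheses of Theorem~\ref{theorema} are satisfied. That theorem yields
\[ \sum_{[g] \in \Gamma_u \bs (\Gamma_u)_{1,p}} \chi_{\orb}(C\td{g}) \equiv 0 \mod \Z_{(p)}, \]
where the sum runs over conjugacy classes of elements of $p$-power order. Splitting off the trivial class $[1]$ contributes $\chi_{\orb}(C\td{1})=\chi_{\orb}(\Gamma_u)$, so the task reduces to identifying $\sum_{m \geq 1} \sum_{[g]:\, |g|=p^m} \chi_{\orb}(C\td{g})$ with the two explicit sums in the proposition.

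For this I would use the structure of the Harer-Zagier formula recalled in the excerpt: for each fixed $k\geq 1$, the $k$-th block of terms in the formula for $\chi_{\mathbb{Q}}(\Gamma_u)$ equals $\sum_{[g]:\, |g|=k}\chi_{\orb}(C\td{g})$, as stated explicitly right after the formula (the grouping comes from Nielsen realization, where each conjugacy class of an order-$k$ element produces branched cover data $(v,s,l_1,\dots,l_s)$ constrained by Riemann-Hurwitz). Specializing to $k=p^m$ with $m\geq 1$, the divisors $l_i\mid k$ with $l_i\neq k$ are exactly $l_i=p^{m_i}$ with $0\leq m_i<m$, and the Riemann-Hurwitz equation becomes precisely $2u-2=p^m(2v-2+s)-p^{m_1}-\cdots-p^{m_s}$, matching the constraint displayed under both sums in the statement.

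It remains to analyze the Euler product $\prod_{q\mid \gcd(l_1,\dots,l_s),\, q\mid k,\, q\text{ prime}}(1-q^{-2v})$ when $k=p^m$. Since the only prime dividing $k$ is $p$, this product equals $1-p^{-2v}$ exactly when $p\mid\gcd(l_1,\dots,l_s)$, i.e., when every $m_i\geq 1$, and equals the empty product $1$ otherwise (when some $m_i=0$). This dichotomy splits the nontrivial $p$-power contribution into exactly the first and second sums of the proposition, and together with the displayed chromatic congruence this yields the claim. The main obstacle is purely the bookkeeping of the Harer-Zagier formula restricted to prime-power $k$ and the clean separation of the two product cases; no further input beyond Theorem~\ref{theorema} and the $k=p^m$ specialization of \cite[Theorem 5]{HZ} is needed.
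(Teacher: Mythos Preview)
Your proposal is correct and follows exactly the paper's approach: the paper's proof consists solely of the sentence ``If we only allow $k$ to run through the powers of $p$, using the Harer-Zagier formula above and Theorem~\ref{theorema} at height $n=1$, we obtain'' followed by a \qed. You have spelled out precisely this argument---applying Theorem~\ref{theorema} at $n=1$, specializing the Harer-Zagier formula to $k=p^m$, and splitting on whether $p\mid\gcd(l_1,\dots,l_s)$---in more detail than the paper itself provides.
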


\noindent To extract concrete number theoretic formulas from this congruence, we need to simplify the left hand side in $\mathbb{Q}/\Z_{(p)}$. It turns out that most of the terms in this congruence are already $p$-integral. Though the calculations are somewhat lengthy, we give here detailed proofs. The only number theoretic input used below is the von Staudt-Clausen theorem \cite{Staudt, Clausen} (see also \cite[Section 3.1]{Arak}) or rather its $p$-local consequence: Let $p$ be a prime and suppose $(p-1) \; \vert \; 2u$. Then 
\[B_{2u}+\frac{1}{p} \equiv 0 \mod \Z_{(p)}.\] 
For the rest of the section we will assume that the prime $p$ is at most $5$ and $u \geq 2$.  We start with 

\begin{lemma} \label{mhgeq2} Let $m \geq 2$, $v \geq 2$, $s \geq 0$, $0 \leq m_i < m$, and $2u-2=p^m(2v-2+s)-p^{m_1}-\dots -p^{m_s}$. 
Then the expression
\[\frac{1}{p^m}\frac{\chi_{\orb}(\Gamma_v^s)}{s!}p^{2mv-2v}N(p^m;p^{m_1},\dots,p^{m_s})\]
is $p$-integral. 
\end{lemma}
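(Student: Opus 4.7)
The plan is to substitute the Harer-Zagier formula
\[\chi_{\orb}(\Gamma_v^s) \;=\; (-1)^s\,\frac{(2v+s-3)!}{2v\,(2v-2)!}\,B_{2v}\]
(available since $v \geq 2$) into the displayed expression and reduce the claim to a $p$-adic valuation bookkeeping. Using the elementary identity $\frac{(2v+s-3)!}{s!\,(2v-2)!} = \frac{1}{2v-2}\binom{2v+s-3}{s}$, the expression becomes
\[\frac{(-1)^s\,p^{2v(m-1)-m}}{2v(2v-2)}\cdot B_{2v}\cdot \binom{2v+s-3}{s}\cdot N(p^m;p^{m_1},\dots,p^{m_s}).\]
Since $\binom{2v+s-3}{s}$ and $N(p^m;p^{m_1},\dots,p^{m_s})$ are non-negative integers, they contribute non-negatively to $v_p$, so the claim reduces to showing that $\frac{p^{2v(m-1)-m}\,B_{2v}}{2v(2v-2)}$ lies in $\Z_{(p)}$.

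To bound the $p$-adic valuation of this prefactor, I would invoke the von Staudt-Clausen theorem for the estimate $v_p(B_{2v}) \geq -1$. Since $p \geq 5$ is odd, $v_p(2v(2v-2)) = v_p(v(v-1))$, and because $v$ and $v-1$ are coprime this equals $\max(v_p(v), v_p(v-1))$. Combining these, $v_p$ of the whole expression is bounded below by
\[2v(m-1) - m - 1 - \max\bigl(v_p(v),\,v_p(v-1)\bigr).\]

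The final step, which I expect to be the main bookkeeping obstacle, is to verify that this lower bound is non-negative for every $m \geq 2$, $v \geq 2$ and $p \geq 5$. Using $m \geq 2$ one has $2v(m-1) - m - 1 \geq 2v - 3$, so it suffices to show $2v - 3 \geq \max(v_p(v), v_p(v-1))$. If neither $v$ nor $v-1$ is divisible by $p$ the right side is $0$, and the inequality is immediate from $v \geq 2$. Otherwise $v \geq 5$ (since $p \mid v$ forces $v \geq p \geq 5$, and $p \mid v-1$ forces $v \geq p+1 \geq 6$), and then the elementary chain $v_p(v),\,v_p(v-1) \leq \log_p v \leq v \leq 2v-3$ closes the estimate. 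The only genuinely number-theoretic input is the von Staudt-Clausen bound on $v_p(B_{2v})$; everything else is direct arithmetic, and I anticipate no conceptual obstruction beyond the care needed to treat the small cases $v \in \{2,3,4\}$ separately from the case $p \mid v(v-1)$.
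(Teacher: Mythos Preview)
Your proof is correct and follows essentially the same route as the paper's: substitute the Harer--Zagier formula for $\chi_{\orb}(\Gamma_v^s)$, absorb $\binom{2v+s-3}{s}$ and $N(p^m;p^{m_1},\dots,p^{m_s})$ as integers, invoke von Staudt--Clausen for $\nu_p(B_{2v})\ge -1$, and then verify the elementary inequality $2mv-2v\ge m+1+\nu_p(2v(2v-2))$. The paper checks this last inequality via $2mv-2v\ge m+v$ together with $v-1\ge\nu_p(2v(2v-2))$, whereas you reduce to the worst case $m=2$ and then split on whether $p\mid v(v-1)$; these are interchangeable bits of bookkeeping for the same bound.
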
 

\begin{proof} The expression can be rewritten as follows:
\begin{align*}\frac{1}{p^m}\frac{\chi_{\orb}(\Gamma_v^s)}{s!}p^{2mv-2v}N(p^m;p^{m_1},\dots,p^{m_s})=\frac{1}{p^m}\cdot \frac{(-1)^s(2v+s-3)!}{s! \cdot 2v(2v-2)!}B_{2v}p^{2mv-2v}N(p^m;p^{m_1},\dots,p^{m_s})\\=\frac{B_{2v}}{p^m \cdot 2v(2v-2)} p^{2mv-2v} \cdot (-1)^s \frac{(2v+s-3)!}{s!(2v-3)!} N(p^m;p^{m_1},\dots,p^{m_s})=\frac{B_{2v}}{p^m \cdot 2v(2v-2)} p^{2mv-2v} \cdot M,\end{align*}
where $M$ is an integer. Now using the von Staudt-Clausen theorem, it suffices to show
\[2mv-2v \geq m+1 +\nu_p(2v(2v-2)),\]
where $\nu_p$ is the $p$-adic valuation. This follows immediately since $v-1 \geq \nu_p(2v(2v-2))$ and 
\[2mv-2v \geq m+v,\]
for $m \geq 2$ and $v \geq 2$. \end{proof}

The following deals with the case $m=1, v \geq 2$ and is proved analogously as the previous lemma:

\begin{lemma} \label{m1hgeq2} Let $v \geq 2$, $s > 0$, and $2u-2=p(2v-2+s)-s$. Then the expression 
\[\frac{1}{p}\frac{\chi_{\orb}(\Gamma_v^s)}{s!}p^{2v}N(p;\underbrace{1, \dots, 1}_{s})\]
is $p$-integral. 
\end{lemma}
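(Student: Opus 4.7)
The plan is to follow the same blueprint as the proof of Lemma \ref{mhgeq2}. First I would substitute $m=1$ together with the Harer-Zagier formula $\chi_{\orb}(\Gamma_v^s)=(-1)^s\tfrac{(2v+s-3)!}{2v(2v-2)!}B_{2v}$ (valid for $v\geq 2$) into the expression and rewrite it as
\[\frac{B_{2v}}{2v(2v-2)}\cdot p^{2v-1}\cdot M, \qquad M := (-1)^s\binom{2v+s-3}{s}N(p;1,\dots,1).\]
For $v\geq 2$ and $s>0$ the combinatorial factor $M$ is manifestly an integer, so the task reduces to showing that the rational number $\tfrac{B_{2v}}{2v(2v-2)}\,p^{2v-1}$ is $p$-integral.

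For this I would invoke the von Staudt-Clausen theorem in its $p$-local consequence $\nu_p(B_{2v})\geq -1$, which yields
\[\nu_p\!\left(\frac{B_{2v}}{2v(2v-2)}\,p^{2v-1}\right) \;\geq\; (2v-2)-\nu_p\bigl(2v(2v-2)\bigr).\]
Since $p\geq 5$ is odd, $\nu_p(2v(2v-2))=\nu_p(v)+\nu_p(v-1)$, and the elementary bound $\nu_p(n)\leq \log_p n\leq n-1$ (valid for $n\geq 1$, $p\geq 2$) gives $\nu_p(v)+\nu_p(v-1)\leq (v-1)+(v-2)=2v-3$. Hence the valuation above is at least $1$, which is more than enough.

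I do not expect any real obstacle; unlike Lemma \ref{mhgeq2} there is no $(1-p^{-2v})$ factor to chase, and the argument is pure valuation bookkeeping on top of von Staudt-Clausen. The only things worth watching are that the hypothesis $v\geq 2$ is genuinely used, both to invoke the $u\geq 2$ branch of the Harer-Zagier formula for $\chi_{\orb}(\Gamma_v^s)$ and to ensure $M$ is a well-defined integer, and that the assumption $s>0$ places us in the second sum of Proposition \ref{p-localHZ}, so the factor $(1-p^{-2v})$ is indeed absent.
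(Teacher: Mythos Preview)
Your proposal is correct and follows exactly the template the paper intends: it says Lemma \ref{m1hgeq2} ``is proved analogously as the previous lemma,'' and your argument is precisely the $m=1$ specialization of the proof of Lemma \ref{mhgeq2}, rewriting the expression as $\tfrac{B_{2v}}{2v(2v-2)}\,p^{2v-1}\cdot M$ with $M\in\Z$ and then bounding the $p$-adic valuation via von Staudt--Clausen. Your valuation bound $\nu_p(v)+\nu_p(v-1)\leq 2v-3$ is a bit cruder than the paper's $\nu_p(2v(2v-2))\leq v-1$ (which uses that $v$ and $v-1$ are coprime), but it is more than sufficient here.
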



The above two lemmas say that all the terms in the formula of Proposition \ref{p-localHZ} are $p$-integral except possibly the terms with $m=1, s=0$, or $m \geq 1, v=0$, or $m \geq 1, v=1$. In these cases many of the terms are still $p$-integral. To be able to deal with these expressions we need an explicit formula for the number $N(p^m;p^{m_1},\dots,p^{m_s})$. The following follows from \cite[page 81]{HZ}:

\begin{lemma} \label{Nnumber} Let $m \geq 1$, $s \geq 0$, $0 \leq m_i < m$. Then
\[N(p^m;p^{m_1},\dots,p^{m_s})=\frac{1}{p^m} \prod_{i=1}^s(p^{m-m_i}-p^{m-m_i-1})p^{\lambda_p} \Big (1-\frac{(-1)^{\mu_p-1}}{(p-1)^{\mu_p-1}} \Big ),\] 
where $\lambda_p=\min\{m_1, \dots, m_s, m-1\}$, and $\mu_p$ is the cardinality of the set
\[\{\; m_i \; \vert \; p^{\lambda_p+1}  \nmid p^{m_i} \; \}.\]
In particular, we have
\[N(p, \underbrace{1, \dots, 1}_{s})=\frac{1}{p}(p-1)^s \Big (1-\frac{(-1)^{s-1}}{(p-1)^{s-1}} \Big ).\] \qed
\end{lemma}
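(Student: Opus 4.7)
The plan is to compute $N(p^m;p^{m_1},\dots,p^{m_s})$ by discrete Fourier analysis on $\Z/p^m$. Using orthogonality of additive characters, I would first rewrite
\[
N(p^m;p^{m_1},\dots,p^{m_s})=\frac{1}{p^m}\sum_{k=0}^{p^m-1}\prod_{i=1}^{s}\Bigl(\sum_{\substack{r\in\Z/p^m\\ \gcd(r,p^m)=p^{m_i}}} e^{2\pi i kr/p^m}\Bigr).
\]
Parametrising $r=p^{m_i}u$ with $u$ ranging over units modulo $p^{m-m_i}$, each inner sum is recognised as the Ramanujan sum $c_{p^{m-m_i}}(k)$.

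I would then invoke the classical prime-power evaluation: writing $\nu=\nu_p(k)$, one has $c_{p^a}(k)=\phi(p^a)$ for $\nu\ge a$, $c_{p^a}(k)=-p^{a-1}$ for $\nu=a-1$, and $c_{p^a}(k)=0$ otherwise. Hence $\prod_i c_{p^{m-m_i}}(k)$ is non-zero exactly when $\nu\ge m-m_i-1$ for every $i$, equivalently when $\nu\ge m-\lambda_p-1$, and I would partition the $k$-sum by $\nu$. In the critical layer $\nu=m-\lambda_p-1$, precisely the $\mu_p$ indices with $m_i=\lambda_p$ contribute a factor $-p^{\nu}$, the remaining $s-\mu_p$ indices contribute $\phi(p^{m-m_i})$, and there are $p^{\lambda_p}(p-1)$ such $k$. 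In the upper layers $\nu\ge m-\lambda_p$ (including $k=0$) every factor equals $\phi(p^{m-m_i})$, and a short geometric-series computation shows the total $k$-count is $(p^{\lambda_p}-1)+1=p^{\lambda_p}$.

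The final step is to collect the two contributions. Factoring out $\phi(p^{m-\lambda_p})^{\mu_p}\cdot\prod_{i\colon m_i>\lambda_p}\phi(p^{m-m_i})=\prod_{i=1}^{s}\phi(p^{m-m_i})$, the sum becomes
\[
p^{\lambda_p}\prod_{i=1}^{s}\phi(p^{m-m_i})\Bigl(1+\frac{(-1)^{\mu_p}}{(p-1)^{\mu_p-1}}\Bigr);
\]
dividing by $p^m$ and using $-(-1)^{\mu_p-1}=(-1)^{\mu_p}$ yields the claimed formula, and specialising $m=1$, $m_i=0$ (so $\lambda_p=0$, $\mu_p=s$, $\phi(p)=p-1$) recovers the displayed expression for $N(p;1,\dots,1)$. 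The main obstacle is the bookkeeping at the critical layer $\nu=m-\lambda_p-1$: the sign $(-1)^{\mu_p}$ and the exponent $p^{\nu\mu_p}$ must be matched correctly against the main term, and the degenerate situations (all $m_i=m-1$, so $\lambda_p=m-1$; and $s=0$, where empty-product conventions apply) need separate verification for consistency with the general formula.
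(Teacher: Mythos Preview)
Your argument is correct. The paper does not actually prove this lemma: it simply quotes the formula from \cite[page 81]{HZ} and places a \qed\ after the statement. Your derivation via additive characters and Ramanujan sums is therefore a genuine addition rather than a paraphrase. The identification of the inner sum with $c_{p^{m-m_i}}(k)$, the trichotomy for $c_{p^a}(k)$ according to $\nu_p(k)$, and the split into the critical layer $\nu=m-\lambda_p-1$ versus the upper layers $\nu\ge m-\lambda_p$ all check out; in particular the count $\sum_{j\ge m-\lambda_p}\#\{k:\nu_p(k)=j\}=p^{\lambda_p}$ and the critical-layer weight $p^{\lambda_p}(p-1)\cdot(-p^{\nu})^{\mu_p}\prod_{m_i>\lambda_p}\phi(p^{m-m_i})$ combine exactly as you wrote. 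The edge cases you flag are fine as well: when all $m_i=m-1$ the critical layer is $\nu=0$ and the computation goes through unchanged, and when $s=0$ the conventions $\lambda_p=m-1$, $\mu_p=0$ give $\tfrac{1}{p^m}\cdot p^{m-1}\cdot(1+(p-1))=1$, matching the unique empty tuple.
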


\begin{rem} \label{mup=1} There is a typo on page 81 of \cite{HZ}, stating that in the above formula one has the factor $\Big (1-\frac{(-1)^{\mu_p}}{(p-1)^{\mu_p}} \Big )$ instead of $\Big(1-\frac{(-1)^{\mu_p-1}}{(p-1)^{\mu_p-1}} \Big )$. In particular, when $\mu_p=1$, then $N(p^m;p^{m_1},\dots,p^{m_s})=0$. \end{rem} 

The sums in Proposition \ref{p-localHZ} are indexed on $s$-tuples $(m_1, \dots, m_s)$. We note that any permutations of such tuples are allowed and the terms involved are independent of these permutations. Let $\pi(m_1,\dots, m_s)$ denote the number of different tuples obtained by permuting $(m_1, \dots, m_s)$. This can be described as a multinomial coefficient. Every term in Proposition \ref{p-localHZ} indexed on $(m_1, \dots, m_s)$ occurs  $\pi(m_1,\dots, m_s)$ many times. This allows us to prove further $p$-integrality statements for the terms involved in Proposition \ref{p-localHZ}. 

\;

We deal next with the case $m \geq 1, v=1$. 

\begin{lemma} \label{oneminot m-1} Let $m \geq 1$, $s \geq 0$, $0 \leq m_i < m$, $2u-2=p^ms-p^{m_1}-\dots -p^{m_s}$ and suppose there exists an index $i_0$ such that $m_{i_0} \neq m-1$. Then the expression
\[\frac{1}{p^m}\frac{\chi_{\orb}(\Gamma_1^s)}{s!}p^{2m-2}N(p^m;p^{m_1},\dots,p^{m_s})\pi(m_1, \dots, m_s)\]
is $p$-integral. 
\end{lemma}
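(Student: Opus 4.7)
The plan is to substitute the explicit formulas for $\chi_{\orb}(\Gamma_1^s)/s!$ and $N$ into the stated quantity and then bound the $p$-adic valuation of the result, following the template of Lemmas \ref{mhgeq2} and \ref{m1hgeq2}.

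First I would observe that the hypothesis $m_{i_0} \neq m-1$ together with $0 \leq m_{i_0} < m$ forces $m_{i_0} \leq m-2$, so in particular $m \geq 2$ and $\lambda_p \leq m-2$; also $s \geq 1$, since $s = 0$ would give $2u-2 = 0$, contradicting $u \geq 2$. Using the Harer-Zagier formula $\chi_{\orb}(\Gamma_1^s)/s! = (-1)^s/(12\,s)$, the fact that $\nu_p(12) = 0$ (as $p \geq 5$), and the formula of Lemma \ref{Nnumber} rewritten as
\[N(p^m;p^{m_1},\ldots,p^{m_s}) = p^{\sum_i(m-m_i-1)+\lambda_p-m}(p-1)^s\,\alpha, \qquad \alpha = 1 - \tfrac{(-1)^{\mu_p-1}}{(p-1)^{\mu_p-1}},\]
the expression in question simplifies to
\[\frac{(-1)^s(p-1)^s\,\alpha\,\pi(m_1,\ldots,m_s)}{12\, s}\cdot p^{\sum_i(m-m_i-1)+\lambda_p-2}.\]

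Next I would split on the value of $\mu_p$. If $\mu_p = 1$, then $\alpha = 0$ by Remark \ref{mup=1} and the expression vanishes, so assume $\mu_p \geq 2$; the congruence $(p-1)^{\mu_p-1} \equiv (-1)^{\mu_p-1} \pmod p$ then yields $\nu_p(\alpha) \geq 1$. Since at least $\mu_p$ of the $m_i$ equal $\lambda_p$ and $m - \lambda_p - 1 \geq 1$, one has the elementary bound
\[\sum_i (m - m_i - 1) + \lambda_p \;\geq\; \mu_p(m - \lambda_p - 1) + \lambda_p \;\geq\; \mu_p + \lambda_p.\]

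To close the argument I would offset the $1/s$ against $\pi$: because $\mu_p$ indices carry the common value $\lambda_p$, the multinomial coefficient $\pi(m_1,\ldots,m_s)$ is divisible by $\binom{s}{\mu_p}$, and the identity $\mu_p\binom{s}{\mu_p} = s\binom{s-1}{\mu_p-1}$ gives $\nu_p(\pi) \geq \nu_p(s) - \nu_p(\mu_p)$. Putting everything together,
\[\nu_p(\text{expression}) \;\geq\; (\mu_p + \lambda_p) + 1 - \nu_p(s) + \bigl(\nu_p(s) - \nu_p(\mu_p)\bigr) - 2 \;=\; \mu_p - \nu_p(\mu_p) - 1 + \lambda_p \;\geq\; 0,\]
since $\nu_p(\mu_p) < \mu_p$ for any integer $\mu_p \geq 1$. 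The main obstacle is finding the cleanest packaging of the competing factors; the key observation is that Remark \ref{mup=1} already absorbs the degenerate configurations ($\mu_p = 1$), so in the remaining case the multiplicity $\mu_p \geq 2$ simultaneously supplies the needed factor of $p$ in $\alpha$ and enough $p$-divisibility in $\pi$ to compensate $1/s$.
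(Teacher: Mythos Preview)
Your proof is correct and follows essentially the same approach as the paper: both identify the multiplicity $x=\mu_p$ of the minimal exponent as the key quantity, use it to extract powers of $p$ from $N$, and exploit the multinomial structure of $\pi$ to cancel the $s$ in the denominator. The paper packages this slightly more cleanly by writing $\pi=\frac{s}{x}M$ and $N=\frac{p^x}{p^m}L$ with $M,L$ $p$-integral (absorbing $\alpha$ into $L$), reducing the claim to the $p$-integrality of $\frac{p^{x-2}}{12x}$; your version instead tracks $\nu_p(\alpha)\geq 1$ and $\nu_p(\pi)\geq \nu_p(s)-\nu_p(\mu_p)$ separately, which yields a marginally sharper bound but is the same argument in substance.
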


\begin{proof} It follows that $m \geq 2$. Let $x$ be the number of those $i$-s such that $m_i$ is minimal and $m_i \neq m-1$. Then we have $x \geq 1$. If $x=1$, then $N(p^m;p^{m_1},\dots,p^{m_s})=0$ by Remark \ref{mup=1} and the claim follows. We may assume that $x$ is at least $2$. By definition of  $\pi(m_1,\dots, m_s)$ and the formula for multinomial coefficients, we get that
\[\pi(m_1,\dots, m_s)=\frac{s}{x} \cdot M,\]
where $M$ is an integer (in fact a multinomial coefficient with $(s-1)!$ as the numerator and involving $(x-1)!$ in the denominator). On the other hand by Lemma \ref{Nnumber}, we get
\[N(p^m;p^{m_1},\dots,p^{m_s})=\frac{1}{p^m} \prod_{i=1}^s p^{m-m_i-1} L'= \frac{1}{p^m} p^x \cdot L,\]
where $L'$ and $L$ are $p$-integers. Hence the relevant expression can be rewritten as follows
\begin{align*} &\frac{1}{p^m}\frac{\chi_{\orb}(\Gamma_1^s)}{s!}p^{2m-2}N(p^m;p^{m_1},\dots,p^{m_s})\pi(m_1, \dots, m_s)\\ &=\frac{1}{p^m} \frac{(-1)^s(s-1)!}{12s!} p^{2m-2} \cdot \frac{1}{p^m}p^x \cdot \frac{s}{x} M L =(-1)^s\cdot\frac{p^{x-2}}{12x} \cdot M \cdot L. \end{align*}
If $x=2$, then this number is $p$-integral since $p \geq 5$. If $x \geq 3$, then using $p \geq 5$, we get $p^{x-2} \geq 5^{x-2} \geq x$, implying that $x-2 \geq \nu_p(x)$. Hence $(-1)^s \cdot \frac{p^{x-2}}{12x} \cdot M \cdot L$ is $p$-integral which completes the proof. \end{proof}

The remaining expressions with $m \geq 1$ and $v=1$ are the terms where $m_i=m-1$ for all $i$. The Riemann-Hurwitz formula $2u-2=p^ms-p^{m-1}s$ tells us that $s$ is uniquely determined by $m$ in such a case. To make this clear we introduce the notation $s(m)=\frac{2u-2}{p^m-p^{m-1}}$. Let $r$ be the largest number such that $(p^{r+1}-p^r) \; \vert \; 2u-2$. Then we have $2u-2=(p^m-p^{m-1})s(m)$ for $1 \leq m \leq r+1$. 

\begin{lemma} \label{allmim-1} Let $p-1 \; \vert \; 2u-2$ and $r \geq 0$ be the largest number such that $(p^{r+1}-p^r) \; \vert \; 2u-2$. Then
\[\frac{1}{p}\frac{\chi_{\orb}(\Gamma_1^{s(1)})}{s(1)!}p^{2}N(p;\underbrace{1,\dots,1}_{s(1)}) + \sum_{m =2}^{r+1} \frac{1}{p^m}\frac{\chi_{\orb}(\Gamma_1^{s(m)})}{s(m)!}p^{2m}(1-p^{-2})N(p^m;\underbrace{p^{m-1},\dots,p^{m-1}}_{s(m)}) \equiv 0 \mod \Z_{(p)}.\]
\end{lemma}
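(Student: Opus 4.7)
The plan is to expand each summand explicitly and reduce the claim to a single $p$-adic congruence that follows from a standard lifting-the-exponent fact. First I would substitute the formula $\chi_{\orb}(\Gamma_1^s) = (-1)^s(s-1)!/12$, which is valid since $s = s(m) \geq 1$ for every $1 \leq m \leq r+1$ by definition of $r$. Applying Lemma \ref{Nnumber} with all entries equal to $m-1$ (so $\lambda_p = m-1$ and $\mu_p = s(m)$) and using the identity $(-1)^{s(m)}(p-1)^{s(m)} = (1-p)^{s(m)}$ collapses the $m$th summand into the compact form $\frac{p^{m-1}\alpha_m}{12\,s(m)}\bigl[(1-p)^{s(m)}-(1-p)\bigr]$, where $\alpha_1 = 1$ and $\alpha_m = 1-p^{-2}$ for $m \geq 2$.

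Next I would write $s(1) = p^r w$ with $\gcd(w,p) = 1$ (by maximality of $r$), so that $s(m) = p^{r-m+1}w$ and all $s(m)$ share the same parity (since $p$ is odd). Pulling out the common factor $1/(12\,s(1))$ recasts the entire left hand side as
\[ \frac{1}{12 \cdot p^r w} \sum_{m=1}^{r+1} c_m \bigl[(1-p)^{s(m)} - (1-p)\bigr], \]
with $c_1 = 1$ and $c_m = p^{2m-4}(p^2-1)$ for $m \geq 2$. A telescoping geometric-series calculation gives $\sum_{m=1}^{r+1} c_m = 1 + (p^2-1)\tfrac{p^{2r}-1}{p^2-1} = p^{2r}$, so the $(1-p)$-part of the bracket contributes $-(1-p)\,p^{2r}$, which after dividing by $12\,p^r w$ becomes $-(1-p)\,p^r/(12w)$, manifestly in $\Z_{(p)}$ since $12$ and $w$ are $p$-units for $p \geq 5$.

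The main step is then to show $\sum_{m=1}^{r+1} c_m (1-p)^{s(m)} \equiv 0 \pmod{p^r}$ (the case $r = 0$ being trivial, since then the bracket reduces to the integer $(1-p)^w - (1-p)$ divided by the $p$-unit $12w$). For this I would invoke the standard congruence that, for odd $p$, if $x \equiv 1 \pmod p$ then $x^{p^k} \equiv 1 \pmod{p^{k+1}}$. Applied to $x = 1-p$, this yields $(1-p)^{s(m)} \equiv 1 \pmod{p^{r-m+2}}$; multiplying by $c_m$, whose $p$-valuation is $2m-4$ for $m \geq 2$, gives $c_m\bigl[(1-p)^{s(m)}-1\bigr] \equiv 0 \pmod{p^{m+r-2}}$, and $m+r-2 \geq r$ for every $m \geq 2$. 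The $m=1$ contribution is $\equiv 0 \pmod{p^{r+1}}$ directly. Summing therefore yields $\sum_{m=1}^{r+1} c_m (1-p)^{s(m)} \equiv \sum_{m=1}^{r+1} c_m = p^{2r} \equiv 0 \pmod{p^r}$ for $r \geq 1$, which combined with the $(1-p)$-contribution proves the lemma.

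The only real obstacle is careful bookkeeping — matching the common parity of the $s(m)$ so that $(-1)^{s(m)}$ can be factored uniformly, and tracking enough $p$-adic precision in each $c_m(1-p)^{s(m)}$ — but no deep number-theoretic input (in particular, not the von Staudt-Clausen theorem) is needed for this particular lemma.
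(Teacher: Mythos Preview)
Your argument is correct and takes a genuinely different route from the paper's. Both proofs begin by substituting $\chi_{\orb}(\Gamma_1^{s(m)})=(-1)^{s(m)}(s(m)-1)!/12$ and the formula of Lemma~\ref{Nnumber}, arriving at the same expression $\frac{p^{m-1}\alpha_m}{12\,s(m)}\bigl[(1-p)^{s(m)}-(1-p)\bigr]$. From there the paper expands $(p-1)^{s(m)}$ by the binomial theorem, shows term by term that all but the two lowest-order contributions are $p$-integral, and is left with the simple fractions $\frac{p^m}{12\,s(m)}$ and $-\frac{p^{m-2}}{12\,s(m)}$; these telescope via $p\,s(m)=s(m-1)$ to the single quantity $\frac{p^{r+1}}{12\,s(r+1)}\in\Z_{(p)}$. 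You instead keep $(1-p)^{s(m)}$ intact, clear the common denominator $12\,s(1)=12\,p^r w$, observe that the integer coefficients $c_m$ telescope to $p^{2r}$, and then invoke the lifting-the-exponent bound $\nu_p\bigl((1-p)^{s(m)}-1\bigr)=1+\nu_p(s(m))=r-m+2$ to see that the numerator lies in $p^r\Z$.

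Your approach trades the binomial expansion for a single clean $p$-adic estimate, which makes the bookkeeping shorter; the paper's approach is slightly more elementary in that it avoids citing LTE, and it produces the explicit residue $\frac{p^{r+1}}{12\,s(r+1)}$ rather than just a divisibility. One small remark: your comment about ``matching the common parity of the $s(m)$'' is unnecessary, since the identity $(-1)^{s(m)}(p-1)^{s(m)}=(1-p)^{s(m)}$ already absorbs the sign without any parity hypothesis.
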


\begin{proof} We first simplify the separate summands in $\mathbb{Q}/\Z_{(p)}$. By Lemma \ref{Nnumber} and the binomial theorem for any $2 \leq m \leq r+1$, one has
\begin{align*} &-\frac{1}{p^m}\frac{\chi_{\orb}(\Gamma_1^{s(m)})}{s(m)!}p^{2m-2}N(p^m;\underbrace{p^{m-1},\dots,p^{m-1}}_{s(m)})\\ &=-\frac{(-1)^{s(m)}p^{m-2}}{12s(m)} \cdot \frac{1}{p}(p-1)^{s(m)} \Big (1-\frac{(-1)^{s(m)-1}}{(p-1)^{s(m)-1}} \Big )\\&=-\frac{(-1)^{s(m)}p^{m-3}}{12s(m)} \Big ( (p-1)^{s(m)} +(-1)^{s(m)}(p-1)  \Big )\\&= -\frac{(-1)^{s(m)}p^{m-3}}{12s(m)} \Big (\sum_{x=0}^{s(m)} \Big (p^x(-1)^{s(m)-x} \frac{s(m)!}{x!(s(m)-x)!} \Big ) +(-1)^{s(m)}(p-1) \Big ) \\&=-\sum_{x=1}^{s(m)} \frac{(-1)^{2s(m)-x} p^{x+m-3}} {12x} \binom{s(m)-1}{x-1} -\frac{p^{m-2}}{12s(m)} \equiv -\frac{p^{m-2}}{12s(m)}  \mod \Z_{(p)}.  \end{align*}
The latter congruence comes from the fact that since $m \geq 2$ and $x \geq 1$, we have $p^{x+m-3} \geq x$. A very similar argument also shows that for any $1 \leq m \leq r+1$, we have
\[\frac{1}{p^m}\frac{\chi_{\orb}(\Gamma_1^{s(m)})}{s(m)!}p^{2m}N(p^m;\underbrace{p^{m-1},\dots,p^{m-1}}_{s(m)}) \equiv \frac{p^m}{12s(m)} \mod \Z_{(p)}.\]
We conclude that the left hand side of the the desired congruence is equal in $\mathbb{Q}/\Z_{(p)}$ to the sum
\[\frac{p}{12s(1)}+\sum_{m=2}^{r+1} \Big (\frac{p^m}{12s(m)} -\frac{p^{m-2}}{12s(m)}  \Big ).\]
Using that $ps(m)=s(m-1)$ for any $2 \leq m \leq r+1$, the latter sum is equal to $\frac{p^{r+1}}{12s(r+1)}$ in $\mathbb{Q}/\Z_{(p)}$. 
By definition of $r$, we know that $p \nmid s(r+1)$. Since $p \geq 5$, this implies that $\frac{p^{r+1}}{12s(r+1)}$ is $p$-integral which completes the proof. \end{proof}

Finally, we deal with the case $m \geq 1, v=0$. We start with the following:
\begin{lemma} \label{3mi=0} Let $m \geq 2$, $s \geq 0$, $0 \leq m_i < m$, and $2u-2=p^m(s-2)-p^{m_1}-\dots -p^{m_s}$. Suppose that the cardinality of the set $\{ \; i \; \vert \; m_i=0 \;\}$ is at least $3$. Then the expression
\[ \frac{1}{p^m}\frac{\chi_{\orb}(\Gamma_0^s)}{s!}N(p^m;p^{m_1},\dots,p^{m_s})\pi(m_1, \dots, m_s)\]
is $p$-integral. \end{lemma}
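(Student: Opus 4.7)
The plan is to reduce the claim to a single $p$-adic valuation inequality via an explicit formula, and then establish that inequality using the lifting-the-exponent (LTE) lemma and an elementary estimate. Write $E$ for the expression in the lemma, and let $y$ denote the cardinality of $\{i : m_i = 0\}$, so $y \geq 3$.

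First, I derive an explicit formula for $E$. Since $y \geq 3$ forces $s \geq 3$, the Harer--Zagier formula gives $\chi_{\orb}(\Gamma_0^s)/s! = (-1)^{s-3}/(s(s-1)(s-2))$. Applying Lemma \ref{Nnumber} with $\lambda_p = 0$ and $\mu_p = y$ yields
\[ N(p^m; p^{m_1}, \ldots, p^{m_s}) = (p-1)^{s-y+1} p^{sm - s - \sum m_i - m}\bigl((p-1)^{y-1} - (-1)^{y-1}\bigr). \]
Writing $n_k$ for the number of $m_i$ equal to $k$, one has $\pi(m_1, \ldots, m_s) = s!/(y! \prod_{k \geq 1} n_k!)$, and the crucial combinatorial identity is
\[ \frac{\pi(m_1, \ldots, m_s)}{s(s-1)(s-2)} = \frac{M}{y(y-1)(y-2)}, \qquad M := \frac{(s-3)!}{(y-3)! \prod_{k \geq 1} n_k!}, \]
where $M$ is a multinomial coefficient (since $(y-3) + \sum_{k \geq 1} n_k = s-3$) and hence an integer. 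Producing $M$ uses $y \geq 3$ essentially: it shifts the ``three-factor denominator'' from $\{s,s-1,s-2\}$ onto $\{y,y-1,y-2\}$. Combining the above gives
\[ E = \pm\, M\, (p-1)^{s-y+1}\, \frac{p^{sm - s - \sum m_i - 2m}\bigl((p-1)^{y-1} - (-1)^{y-1}\bigr)}{y(y-1)(y-2)}. \]

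Next I apply LTE with $x = p-1$, $z = -1$: since $p$ is odd, $p \mid (x - z)$ and $p \nmid xz$, so LTE gives $\nu_p((p-1)^{y-1} - (-1)^{y-1}) = 1 + \nu_p(y-1)$. This cancels the $(y-1)$ factor in the denominator, leaving $\nu_p(E) \geq sm - s - \sum m_i - 2m + 1 - \nu_p(y(y-2))$. The bound $\sum m_i \leq (m-1)(s-y)$ (each nonzero $m_i$ is at most $m-1$, and $y$ of them vanish) then simplifies this to
\[ \nu_p(E) \geq (m-1)(y-2) - 1 - \nu_p(y(y-2)). \]

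Finally I check that this is nonnegative. Since $\gcd(y, y-2) \mid 2$ and $p \geq 5$, at most one of $y, y-2$ is divisible by $p$, so $p^w \leq y$ for $w := \nu_p(y(y-2))$. An elementary induction using $p \geq 5$ yields $p^w \geq w + 3$ for every $w \geq 1$; combined with the hypothesis $y \geq 3$ for the $w = 0$ case, this gives $y - 2 \geq w + 1$, and multiplying by $m - 1 \geq 1$ concludes. I expect the main obstacle to be identifying and proving the combinatorial shift from $1/(s(s-1)(s-2))$ to $M/(y(y-1)(y-2))$; without this step, the cancellation supplied by LTE would be insufficient to neutralize large powers of $p$ that could divide $s$, $s-1$, or $s-2$.
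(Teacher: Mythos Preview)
Your proof is correct and follows essentially the same route as the paper: both expand $N$ via Lemma~\ref{Nnumber}, use the multinomial identity $\pi(m_1,\dots,m_s)/\bigl(s(s-1)(s-2)\bigr) = M/\bigl(y(y-1)(y-2)\bigr)$ with $M$ an integer, and reduce to a $p$-adic inequality of the form $(m-1)(y-2)-1 \geq \nu_p(\text{denominator})$. The only difference is that the paper uses the cruder bound $\nu_p\bigl((p-1)^{y-1}-(-1)^{y-1}\bigr) \geq 1$ and handles all three factors $y(y-1)(y-2)$ together (noting at most one is divisible by $p$ since $p\geq 5$), whereas you apply LTE to get the exact valuation $1+\nu_p(y-1)$ and cancel the $y-1$ factor first; this is a mild refinement rather than a genuinely different argument.
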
 

\begin{proof} Let $x$ denote the cardinality of the set $\{ \; i \; \vert \; m_i=0 \;\}$. Then by our assumption, we have $x \geq 3$ and hence also $s \geq 3$. 
By Lemma \ref{Nnumber}, one gets
\begin{align*} &  \frac{1}{p^m}\frac{\chi_{\orb}(\Gamma_0^s)}{s!}N(p^m;p^{m_1},\dots,p^{m_s})\pi(m_1, \dots, m_s)\\&= \frac{(-1)^{s-3}}{p^ms(s-1)(s-2)} \cdot \frac{1}{p^m} \prod_{i=1}^s p^{m-m_i-1} (p-1)^s \Big ( 1-\frac{(-1)^{x-1}}{(p-1)^{x-1}}  \Big) \pi(m_1, \dots, m_s).\\ \end{align*}
The number $\pi(m_1, \dots, m_s)$ can be expressed as a multinomial coefficient. Since $x \geq 3$, it can be written as follows
\[\pi(m_1, \dots, m_s)=\frac{s(s-1)(s-2)}{x(x-1)(x-2)} \cdot \frac{(s-3)!}{(x-3)! \cdots} = \frac{s(s-1)(s-2)}{x(x-1)(x-2)} \cdot M,\]
where $M$ is an integer. We also know that $p^{(m-1)x} \; \vert \;  \prod_{i=1}^s p^{m-m_i-1}$ and $p \; \vert \;((p-1)^{x-1} +(-1)^x)$. Hence we get 
\begin{align*} \frac{1}{p^m}\frac{\chi_{\orb}(\Gamma_0^s)}{s!}N(p^m;p^{m_1},\dots,p^{m_s})\pi(m_1, \dots, m_s)=\frac{(-1)^{s-3}p^{(m-1)x-2m+1}}{x(x-1)(x-2)} \cdot L,  \end{align*}
where $L$ is $p$-integral. If $x=3$ then the latter is $p$-integral since $m \geq 2$ and $p \geq 5$. And if $x \geq 4$, then $p^{(m-1)x-2m+1} \geq x$ since $p \geq 5$ and $m \geq 2$, thus showing that $\frac{(-1)^{s-3}p^{(m-1)x-2m+1}}{x(x-1)(x-2)}$ is $p$-integral. This completes the proof. \end{proof}

The next lemma is proved analogously as the previous one and we skip the proof: 

\begin{lemma} \label{2mo=0onelessm-1} Let $m \geq 2$, $s \geq 0$, $0 \leq m_i < m$, and $2u-2=p^m(s-2)-p^{m_1}-\dots -p^{m_s}$. Suppose that the cardinality of the set $\{ \; i \; \vert \; m_i=0 \;\}$ is equal to $2$ and there exists an index $j_0$ such that $0<m_{j_0} <m-1$. Then the expression
\[ \frac{1}{p^m}\frac{\chi_{\orb}(\Gamma_0^s)}{s!}N(p^m;p^{m_1},\dots,p^{m_s})\pi(m_1, \dots, m_s)\]
is $p$-integral. \end{lemma}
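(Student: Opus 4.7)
The plan is to mirror the proof of Lemma \ref{3mi=0}, adapting it to the case $x=2$ (where $x$ denotes the number of indices with $m_i=0$) by exploiting the extra power of $p$ furnished by the hypothesis that some $m_{j_0}$ lies strictly between $0$ and $m-1$. Write $y\geq 1$ for the number of indices with $0<m_i<m-1$ and $z\geq 0$ for the number with $m_i=m-1$, so that $s=2+y+z\geq 3$.

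First, apply Lemma \ref{Nnumber} with $\lambda_p=0$ and $\mu_p=2$. The correction factor $1-\frac{(-1)^{\mu_p-1}}{(p-1)^{\mu_p-1}}$ becomes $\frac{p}{p-1}$, the two zeros contribute $p^{2(m-1)}$ to $\prod_i p^{m-m_i-1}$, and the middle indices contribute $p^{T}$ with $T:=\sum_{i:\,0<m_i<m-1}(m-m_i-1)\geq y\geq 1$. A direct count gives
\[\tfrac{1}{p^m}N(p^m;p^{m_1},\dots,p^{m_s})=p^{T-1}(p-1)^{s-1}.\]
Combined with $\frac{\chi_{\orb}(\Gamma_0^s)}{s!}=\frac{(-1)^{s-3}}{s(s-1)(s-2)}$ and the multinomial expression $\pi=\frac{s!}{2\cdot z!\prod_{\mathrm{mid}} n_k!}$ for the permutation count (where $n_k$ is the multiplicity of the value $k$ among the $m_i$), the target expression reduces to
\[\frac{(-1)^{s-3}(p-1)^{s-1}p^{T-1}(s-3)!}{2\cdot z!\prod_{\mathrm{mid}} n_k!}.\]

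The combinatorial heart of the argument, and the one genuine departure from Lemma \ref{3mi=0}, is next. There the three factors $s(s-1)(s-2)$ in the denominator were fully absorbed by the $x(x-1)(x-2)$ factors of $\pi$ coming from the $x\geq 3$ zeros. With only $x=2$ zeros this is no longer possible, so instead use $s-3=z+y-1$ and the identity
\[\frac{(s-3)!}{z!\prod_{\mathrm{mid}} n_k!}=\binom{s-3}{z}\cdot\frac{(y-1)!}{\prod_{\mathrm{mid}} n_k!}=\binom{s-3}{z}\cdot\frac{1}{y}\binom{y}{n_{k_1},n_{k_2},\dots},\]
in which the rightmost factor is a multinomial coefficient, hence an integer. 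The target expression thereby rewrites as $\frac{(-1)^{s-3}(p-1)^{s-1}p^{T-1}M}{2y}$ for some integer $M$.

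Since $p\geq 5$, the $p$-valuation of this number is at least $T-1-\nu_p(y)$. Using $T\geq y$ together with the elementary bound $\nu_p(y)\leq y-1$ (valid for every $y\geq 1$ and $p\geq 2$, since $p^{y-1}\geq 2^{y-1}\geq y$) gives $T-1-\nu_p(y)\geq 0$, establishing $p$-integrality. The only real obstacle is the combinatorial reorganization above: once one recognises that the missing denominator factor $s-2$ can be traded for a factor of $y$, which is then cleanly absorbed by the $p^{T-1}$ supplied by $N$, the argument is a direct analogue of Lemma \ref{3mi=0}.
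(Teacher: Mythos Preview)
Your proof is correct and is precisely the kind of argument the paper has in mind: the paper omits the proof entirely, stating only that it ``is proved analogously as the previous one'' (Lemma \ref{3mi=0}). Your computation of $N$ via Lemma \ref{Nnumber} with $\lambda_p=0$ and $\mu_p=2$, the reduction of the expression to $\frac{(-1)^{s-3}(p-1)^{s-1}p^{T-1}M}{2y}$, and the valuation bound $T-1\geq y-1\geq \nu_p(y)$ all check out.

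The one point worth noting is that your combinatorial reorganisation differs slightly in flavour from the literal template of Lemma \ref{3mi=0}. There the three factors $s(s-1)(s-2)$ were cancelled against $x(x-1)(x-2)$ coming from the block of $x$ zeros in $\pi$. Here, with only $x=2$ zeros, you instead cancel $s(s-1)$ against the two zeros and absorb the remaining factor $s-2$ into a denominator $y$ using the identity $\frac{(s-3)!}{z!\prod n_k!}=\binom{s-3}{z}\cdot\frac{1}{y}\binom{y}{n_{k_1},\dots}$. This is the right adaptation: the extra power of $p$ needed to kill the $1/y$ is supplied by $p^{T-1}$, which is available precisely because of the hypothesis that some $m_{j_0}$ lies strictly between $0$ and $m-1$. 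An equally valid variant would pull out a single middle multiplicity $n_{k_1}$ rather than the full $y$, but your version is cleaner.
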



Finally, we consider the case when $m \geq 1$, $v=0$, the cardinality of the set $\{ \; i \; \vert \; m_i=0 \;\}$ is equal to $2$ and all the other $m_i$-s are equal to $m-1$.  The Riemann-Hurwitz formula $2u=(p^m-p^{m-1})(s-2)$ tells us that $s$ is uniquely determined by $m$. To make this clear we introduce the notation $s(m)=\frac{2u}{p^m-p^{m-1}}+2$. Let $r$ be the largest number such that $(p^{r+1}-p^r) \;  \vert  \; 2u$. Then we have $2u=(p^m-p^{m-1})(s(m)-2)$ for $1 \leq m \leq r+1$. 

\begin{lemma} \label{2mi0otherm-1} Let $p-1 \; \vert \; 2u$ and $r \geq 0$ be the largest number such that $(p^{r+1}-p^r) \; \vert \; 2u$. Then 
\begin{align*}\frac{1}{p}\frac{\chi_{\orb}(\Gamma_0^{s(1)})}{s(1)!}N(p;\underbrace{1,\dots,1}_{s(1)}) + \sum_{m =2}^{r+1} \frac{1}{p^m}\frac{\chi_{\orb}(\Gamma_0^{s(m)})}{s(m)!}N(p^m;\underbrace{p^{m-1},\dots,p^{m-1}}_{s(m)-2},1,1) \pi(\underbrace{m-1,\dots,m-1}_{s(m)-2},0,0) \\ \equiv \frac{1}{ps(1)(s(1)-2)}  \mod \Z_{(p)}.\hspace{4.5cm}\end{align*}\end{lemma}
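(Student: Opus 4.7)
The plan is to mimic the proof of Lemma \ref{allmim-1}: simplify each summand explicitly using Lemma \ref{Nnumber} and $\chi_{\orb}(\Gamma_0^s) = (-1)^{s-3}(s-3)!$, then reduce modulo $\Z_{(p)}$ by exploiting the telescoping relation $p(s(m)-2) = s(m-1)-2$. Write $u_m := s(m)-2$, so that $pu_m = u_{m-1}$ and $\nu_p(u_m) = r+1-m$ for $1 \leq m \leq r+1$. Applying Lemma \ref{Nnumber} with $\lambda_p = 0$, $\mu_p = 2$ gives $N(p^m; p^{m-1},\dots,p^{m-1}, 1, 1) = p^{m-1}(p-1)^{s(m)-1}$ for $m \geq 2$; together with $\pi(\dots) = \binom{s(m)}{2}$ and the identity $(p-1)^{s(m)-1} = (-1)^{s(m)-1}(1-p)^{s(m)-1}$, the $m$-th summand for $m \geq 2$ simplifies to
\[T_m = \frac{(1-p)^{s(m)-1}}{2pu_m},\]
while a parallel calculation yields
\[T_1 = \frac{(1-p)\bigl[1-(1-p)^{s(1)-1}\bigr]}{p^2 s(1)(s(1)-1) u_1}.\]

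Next, I would expand both via the binomial theorem and isolate the non-$p$-integral contributions. For $m \geq 2$,
\[T_m - \frac{1}{2pu_m} = \sum_{x=1}^{s(m)-1}\frac{(-1)^x p^{x-1}\binom{s(m)-1}{x}}{2u_m},\]
and the $x=1$ term equals $-\tfrac{1}{2} - \tfrac{1}{2u_m}$; since $p \geq 5$, the $-\tfrac{1}{2}$ is $p$-integral. The key observation for $x \geq 2$ is the identity
\[\frac{\binom{u_m+1}{x}}{u_m} = \frac{\binom{u_m-1}{x-1}}{x} + \frac{\binom{u_m-1}{x-2}}{x-1},\]
which follows from Pascal's rule together with $\binom{u_m}{k} = \tfrac{u_m}{k}\binom{u_m-1}{k-1}$. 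Multiplying by $p^{x-1}$ writes the correction as a sum of two $p$-integral pieces, since $\nu_p(p^{x-1}/n) = x-1-\nu_p(n) \geq 0$ for every integer $n \geq 1$. Hence $T_m \equiv \tfrac{1}{2u_{m-1}} - \tfrac{1}{2u_m} \pmod{\Z_{(p)}}$. For $T_1$, an analogous expansion yields
\[T_1 - \frac{1}{ps(1)u_1} = \sum_{x=2}^{s(1)} \frac{(-1)^{x+1}\binom{s(1)}{x}p^{x-2}}{s(1)(s(1)-1)u_1};\]
for $x \geq 3$, the factor $u_1 = s(1)-2$ in the denominator cancels a factor inside $\binom{s(1)}{x}$, leaving $\tfrac{\pm(s(1)-3)\cdots(s(1)-x+1)p^{x-2}}{x!}$, which is $p$-integral for $p \geq 5$ by the standard bound $\nu_p(x!) \leq (x-1)/(p-1)$. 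The $x=2$ contribution is precisely $-\tfrac{1}{2u_1}$, so $T_1 \equiv \tfrac{1}{ps(1)u_1} - \tfrac{1}{2u_1} \pmod{\Z_{(p)}}$.

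Summing and telescoping, the $\pm\tfrac{1}{2u_1}$ pieces cancel:
\[T_1 + \sum_{m=2}^{r+1} T_m \equiv \frac{1}{ps(1)u_1} - \frac{1}{2u_1} + \left(\frac{1}{2u_1} - \frac{1}{2u_{r+1}}\right) = \frac{1}{ps(1)u_1} - \frac{1}{2u_{r+1}} \pmod{\Z_{(p)}},\]
and since $\nu_p(u_{r+1}) = 0$, the term $\tfrac{1}{2u_{r+1}}$ is $p$-integral, giving the desired congruence (and the $r=0$ case, where the sum over $m$ is empty, works because $u_1$ is then itself a $p$-adic unit). The main technical obstacle is the $p$-integrality of the higher-order binomial correction terms: the splitting identity above handles the case $m \geq 2$, while for $m = 1$ the cancellation of $u_1$ with a factor inside $\binom{s(1)}{x}$ (for $x \geq 3$), combined with the elementary bound on $\nu_p(x!)$, is what makes those terms disappear modulo $\Z_{(p)}$.
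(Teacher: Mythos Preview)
Your proof is correct and follows essentially the same strategy as the paper's: evaluate each summand via Lemma~\ref{Nnumber} and the binomial theorem, discard the higher-order terms as $p$-integral, and telescope using $p(s(m)-2)=s(m-1)-2$ to reduce to $\frac{1}{ps(1)(s(1)-2)}-\frac{1}{2(s(r+1)-2)}$. One small wording slip: the assertion ``$\nu_p(p^{x-1}/n)\geq 0$ for every integer $n\geq 1$'' is false as stated (take $n=p^x$), but you only invoke it for $n\in\{x-1,x\}$ with $x\geq 2$, where it does hold since $p^{x-1}\geq x$.
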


\begin{proof} We first simplify the separate summands in $\mathbb{Q}/\Z_{(p)}$. By Lemma \ref{Nnumber} and the binomial theorem, one has
\begin{align*}& \frac{1}{p}\frac{\chi_{\orb}(\Gamma_0^{s(1)})}{s(1)!}N(p;\underbrace{1,\dots,1}_{s(1)})  = \frac{(-1)^{s(1)-3}}{ps(1)(s(1)-1)(s(1)-2)} \cdot \frac{1}{p} (p-1)^{s(1)} \Big ( 1- \frac{(-1)^{s(1)-1}}{(p-1)^{s(1)-1}}\Big ) \\&=\frac{(-1)^{s(1)-3}}{p^2s(1)(s(1)-1)(s(1)-2)} \Big ( (p-1)^{s(1)} +(-1)^{s(1)} (p-1) \Big)  \\&= \frac{(-1)^{s(1)-3}}{p^2s(1)(s(1)-1)(s(1)-2)} \Big((-1)^{s(1)}p+ (-1)^{s(1)-1}ps(1)\\&+(-1)^{s(1)-2}p^2\cdot \frac{s(1)!}{2!\cdot (s(1)-2)!} + \sum_{x=3}^{s(1)}p^x \cdot (-1)^{s(1)-x}\cdot \frac{s(1)!}{x!\cdot (s(1)-x)!}    \Big ) \\&= \frac{1}{p(s(1)-1)(s(1)-2)}- \frac{1}{ps(1)(s(1)-1)(s(1)-2)}-\frac{1}{2(s(1)-2)}+\sum_{x=3}^{s(1)}\frac{(-1)^{x+3}p^{x-2}}{x(x-1)(x-2)} \binom{s(1)-3}{x-3} \\ & \equiv \frac{1}{p(s(1)-1)(s(1)-2)}- \frac{1}{ps(1)(s(1)-1)(s(1)-2)}-\frac{1}{2(s(1)-2)}\\&= \frac{1}{ps(1)(s(1)-2)}-\frac{1}{2(s(1)-2)} \mod \Z_{(p)}. \hspace{3cm}\end{align*}
Here the congruence comes from the fact that the terms $\frac{p^{x-2}}{x(x-1)(x-2)}$ are $p$-integral for $x \geq 3$. The latter follows since $p \geq 5$, $x \geq 3$, and $p^{x-2} \geq x$. 
Furthermore, again using Lemma \ref{Nnumber} and the binomial theorem, for any $2 \leq m \leq r+1$, a similar calculation shows
\begin{align*} & \frac{1}{p^m}\frac{\chi_{\orb}(\Gamma_0^{s(m)})}{s(m)!}N(p^m;\underbrace{p^{m-1},\dots,p^{m-1}}_{s(m)-2},1,1) \pi(\underbrace{m-1,\dots,m-1}_{s(m)-2},0,0) \\ 
& \equiv \frac{1}{2p(s(m)-2)}-\frac{1}{2(s(m)-2)} \mod \Z_{(p)}.  \end{align*}
Now the left hand side of the desired congruence is equal in $\mathbb{Q}/\Z_{(p)}$ to the sum
\begin{align*}\frac{1}{ps(1)(s(1)-2)}-\frac{1}{2(s(1)-2)} +\sum_{m=2}^{r+1} \Big ( \frac{1}{2p(s(m)-2)}-\frac{1}{2(s(m)-2)}  \Big ).\end{align*}
Using that $p(s(m)-2)=s(m-1)-2$ for any $2 \leq m \leq r+1$, the latter sum simplifies to
\[\frac{1}{ps(1)(s(1)-2)}- \frac{1}{2(s(r+1)-2)},\]
where the term $\frac{1}{2(s(r+1)-2)}$ is $p$-integral by definition of $r$. This completes the proof. \end{proof}

We are finally ready to prove the main theorem of this section:

\begin{theorem} \label{kummer} Let $u \geq 2$ be an integer. Then
\begin{enumerate}

\item If $p-1 \; \vert \; 2u$ and $p \; \vert \; 2u-2$, then
\[ \frac{\zeta(1-2u)}{2-2u}-\frac{1}{p}\cdot \frac{\zeta(1-2v)}{2-2v} + \frac{(p-1)^2}{2u \cdot p(2u+2p-2)} \equiv 0 \mod \Z_{(p)},  \]
where $v=\frac{u-1}{p}+1$. 

\item If $p-1  \nmid  2u$ and $p \; \vert \; 2u-2$, then
\[ \frac{\zeta(1-2u)}{2-2u}-\frac{1}{p}\cdot \frac{\zeta(1-2v)}{2-2v} \equiv 0 \mod \Z_{(p)},\]
where $v=\frac{u-1}{p}+1$. 

\item If $p-1 \; \vert \; 2u$ and $p \nmid 2u-2$, then
\[\frac{\zeta(1-2u)}{2-2u}+\frac{(p-1)^2}{2u \cdot p(2u+2p-2)}\equiv 0 \mod \Z_{(p)}.\]

\item If $p-1 \nmid 2u$ and $p \nmid 2u-2$, then
\[\frac{\zeta(1-2u)}{2-2u} \equiv 0 \mod \Z_{(p)}.\]

\end{enumerate}

\end{theorem}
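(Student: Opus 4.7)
The plan is to start from the explicit congruence of Proposition \ref{p-localHZ} and sort each summand into one of two piles: terms already known to be $p$-integral (hence zero in $\mathbb{Q}/\Z_{(p)}$), and the handful of terms that survive and must be summed to give the right-hand side of each of the four cases. The lemmas \ref{mhgeq2}, \ref{m1hgeq2}, \ref{oneminot m-1}, \ref{3mi=0}, \ref{2mo=0onelessm-1}, \ref{allmim-1}, \ref{2mi0otherm-1}, together with Remark \ref{mup=1} and the $v=0$ vanishing $(1-p^{-2v})=0$, do essentially all the work; the task is really a careful bookkeeping of which configurations $(m,v,s,(m_i))$ remain.

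First I would dispose of the ``easy'' contributions. Terms with $v=0$ and all $m_i\neq 0$ vanish because of the factor $(1-p^{-2v})=0$; terms with $v=0$ and exactly one $m_i=0$ vanish by Remark \ref{mup=1}. Then Lemma \ref{mhgeq2} (and the same valuation estimate applied to the $p^{2mv}$ piece of the first sum, which works because $m\geq 2$ and $v\geq 2$ imply $2mv-m\geq v+1>\nu_p(2v(2v-2))$ for $p\geq 5$) eliminates the $m\geq 2,\,v\geq 2$ contributions; Lemma \ref{m1hgeq2} eliminates $m=1,\,v\geq 2,\,s>0$; Lemma \ref{oneminot m-1} eliminates $v=1$ with some $m_i\neq m-1$; and Lemmas \ref{3mi=0}, \ref{2mo=0onelessm-1} together eliminate $v=0$ terms with at least three $m_i=0$, or with exactly two $m_i=0$ but some other $m_j$ strictly between $0$ and $m-1$.

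What remains after these eliminations are four families of terms:
\textbf{(a)} the main term $\chi_{\orb}(\Gamma_u)=\frac{\zeta(1-2u)}{2-2u}$;
\textbf{(b)} the unique $m=1,\,s=0$ term, present only when $p\mid 2u-2$, contributing $\frac{1}{p}\chi_{\orb}(\Gamma_v)(p^{2v}-1)$ with $v=(u-1)/p+1\geq 2$, which reduces mod $\Z_{(p)}$ to $-\frac{1}{p}\cdot\frac{\zeta(1-2v)}{2-2v}$ once one observes $p^{2v-1}\chi_{\orb}(\Gamma_v)\in\Z_{(p)}$ by the von Staudt--Clausen bound $\nu_p(B_{2v})\geq -1$ against the estimate $2v-1\geq \nu_p(2v(2v-2))+1$ for $v\geq 2,\,p\geq 5$;
\textbf{(c)} the $v=1$ all-$m_i=m-1$ package, which by Lemma \ref{allmim-1} equals $0\bmod\Z_{(p)}$ when $(p-1)\mid 2u-2$ and is simply the empty sum otherwise;
\textbf{(d)} the $v=0$ package with exactly two $m_i=0$ and the rest equal to $m-1$, which by Lemma \ref{2mi0otherm-1} equals $\frac{1}{ps(1)(s(1)-2)}=\frac{(p-1)^2}{2u\cdot p(2u+2p-2)}\bmod\Z_{(p)}$ when $(p-1)\mid 2u$ and is empty otherwise.

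The four cases of the theorem now follow by case-splitting on the divisibilities $(p-1)\mid 2u$ and $p\mid 2u-2$: in each case, adding (a) to whichever of (b) and (d) is nonempty produces precisely the congruence claimed, while (c) contributes nothing (for $p\geq 5$ the two divisibilities $(p-1)\mid 2u$ and $(p-1)\mid 2u-2$ cannot hold simultaneously, since their difference is $2$). The main obstacle is really the combinatorial bookkeeping, namely verifying that the list of surviving configurations is complete and that each lemma is applied in exactly the right regime; the only genuine number-theoretic input is the $p$-local von Staudt--Clausen theorem already used in the earlier lemmas.
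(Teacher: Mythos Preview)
Your proposal is correct and follows essentially the same route as the paper: start from Proposition~\ref{p-localHZ}, use Lemmas~\ref{mhgeq2}--\ref{2mi0otherm-1} together with Remark~\ref{mup=1} and the $v=0$ vanishing $(1-p^{-2v})=0$ to discard almost every term, and then read off the four cases from the survivors (a)--(d). Your bookkeeping matches the paper's, including the reduction of the $m=1,\,s=0$ term via von Staudt--Clausen and the identification $\frac{1}{ps(1)(s(1)-2)}=\frac{(p-1)^2}{2u\cdot p(2u+2p-2)}$; the only cosmetic addition is your explicit remark that the $p^{2mv}$ piece is handled a fortiori once the $p^{2mv-2v}$ piece of Lemma~\ref{mhgeq2} is, which the paper leaves implicit.
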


\begin{proof} All the formulas follow from Proposition \ref{p-localHZ} and the formula $\chi_{\orb}(\Gamma_v)=\frac{\zeta(1-2v)}{2-2v}$ \cite{HZ}, after observing that most of the terms in Proposition \ref{p-localHZ}  are $p$-integral. The terms with $m \geq 2, v \geq 2$, or $m=1, s>0, v \geq 2$, are $p$-integral by Lemma \ref{mhgeq2} and Lemma \ref{m1hgeq2}, respectively. The sum of the terms with $m \geq 1$ and $v=1$ is also $p$-integral by Lemma \ref{oneminot m-1} and Lemma \ref{allmim-1}. To get the desired formulas we need to consider the remaining terms with $m=1, s=0$, or $m \geq 1, v=0$, and go through the four cases:

\begin{enumerate}

\item In this case for the term with $m=1, s=0$, the Riemann-Hurwitz formula gives $2u-2=p(2v-2)$ and hence $v=\frac{u-1}{p}+1$. This term looks as follows
\[ \frac{1}{p}\chi_{\orb}(\Gamma_v)p^{2v}(1-p^{-2v})=\frac{1}{p}\frac{\zeta(1-2v)}{2-2v}p^{2v}-\frac{1}{p} \cdot \frac{\zeta(1-2v)}{2-2v},\]
where the first summand is $p$-integral by the von Staudt-Clausen theorem and since $p^{2v-2} \geq v$. We also have the sum of the terms with $m \geq 1, v=0$. In this sum the terms with $m_i \neq 0$ for all $i$ are zero since $v=0$ and hence $1-p^{-2v}=0$ . The terms with $m_i=0$ for only one $i$ are also zero since in this case $N(p^m, p^{m_1}, \dots, p^{m_s})=0$ by Remark \ref{mup=1}.  Using these observations, Lemma \ref{3mi=0}, Lemma \ref{2mo=0onelessm-1}, Lemma \ref{2mi0otherm-1} and Proposition \ref{p-localHZ}, we get
\[ \chi_{\orb} (\Gamma_u) -\frac{1}{p} \cdot \frac{\zeta(1-2v)}{2-2v} +\frac{1}{ps(s-2)} \equiv 0 \mod \Z_{(p)},\]
where $2u=(s-2)(p-1)$ and hence we get
\[\frac{\zeta(1-2u)}{2-2u}-\frac{1}{p} \cdot \frac{\zeta(1-2v)}{2-2v} +\frac{(p-1)^2}{2u \cdot p(2u+2p-2)}\equiv 0 \mod \Z_{(p)}.\]

\item The terms with $m \geq 1, v=0$ do not occur in this case. Indeed, suppose such a term exists. Then by the Riemann-Huwritz formula, we get
\[2u-2=p^m(s-2)-p^{m_1}-\dots-p^{m_s},\]
where $0 \leq m_i < m$. Hence
\[2u=\sum_{i=1}^s (p^m-p^{m_i})-2(p^m-1)=\sum_{i=1}^s p^{m_i}(p^{m-m_i}-1)-2(p^m-1),\]
implying $p-1 \; \vert \; 2u$, and thus contradicting to our assumption. The term with $m=1, s=0$ is congruent to
\[-\frac{1}{p} \cdot \frac{\zeta(1-2v)}{2-2v} \]
exactly as in Part i). This covers all the summands in Proposition \ref{p-localHZ} and we obtain
\[\frac{\zeta(1-2u)}{2-2u}-\frac{1}{p} \cdot \frac{\zeta(1-2v)}{2-2v} \equiv 0 \mod \Z_{(p)}.\]

\item The term with $m=1, s=0$ does not occur in this case since $p  \nmid 2u-2$ and the Riemann-Hurwitz formula $2u-2=p(2v-2)$ cannot hold. What remains is the sum of the terms with $m \geq 1, v=0$ which is computed exactly as in Part i) and we get
\[\frac{\zeta(1-2u)}{2-2u}+\frac{(p-1)^2}{2u \cdot p(2u+2p-2)}\equiv 0 \mod \Z_{(p)}.\]

\item The term with $m=1, s=0$ does not occur for the same reason as in Part iii) and the terms with $m \geq 1, v=0$ do not occur for the same reason as in Part ii). This covers all the summands and by Proposition \ref{p-localHZ}, we obtain
\[\chi_{\orb}(\Gamma_u)=\frac{\zeta(1-2u)}{2-2u} \equiv 0 \mod \Z_{(p)}.\] 
\end{enumerate}
\end{proof}

\begin{rem} The congruence of Part i) recovers a lesser known version of Kummer's congruence when $p-1 \; \vert \; 2u$ and $p-1 \; \vert \; 2v$ (see \cite[Proposition 11.4.4]{Cohen}). The conditions $p-1 \; \vert \; 2u$ and $p \; \vert \; 2u-2$ hold if and only if the number $u$ can be written as follows
\[u=p^r\Big( \frac{p-1}{2}k-1  \Big)+1,\]
where $k \geq1$ and $r \geq 1$. Then
\[v=p^{r-1}\Big( \frac{p-1}{2}k-1  \Big)+1,\]
and we see $p-1 \; \vert \; 2v$. 

Let $x$ denote the expression $\frac{p-1}{2}k-1$. Then the congruence of Part i) can be rewritten as
\[ -\zeta(1-2u) + \zeta(1-2v) \equiv -\frac{xp^r(p-1)^2}{u \cdot p(2u+2p-2)} \mod p^r\Z_{(p)}. \]
In terms of Bernoulli numbers after simplifying one obtains 
\[\frac{B_{2u}}{2u}- \frac{B_{2v}}{2v} \equiv \Big ( \frac{1}{2u}-\frac{1}{2v} \Big )\Big(1-{\frac{1}{p}}\Big) \mod p^r\Z_{(p)}.\]
This agrees with \cite[Proposition 11.4.4]{Cohen}.


\end{rem}

\begin{rem} Part ii) of Theorem \ref{kummer} recovers a special case of Kummer's congruence proved by Kummer and Voronoi \cite{Kum, Vor}. Indeed, suppose $2u-2=p^ry$, where $\gcd(y,p)=1$. Then $v=\frac{u-1}{p}+1$ and hence $2v-2=p^{r-1}y$. If $p-1 \nmid 2u$, then $p-1 \nmid 2v$. The congruence in Part ii) can be rewritten as follows
\[ \frac{\zeta(1-2u)}{-p^ry}-\frac{\zeta(1-2v)}{-p^ry} \equiv 0 \mod \Z_{(p)},\]
or equivalently 
\[\frac{B_{2u}}{2u} \equiv \frac{B_{2v}}{2v} \mod p^r\Z_{(p)}. \]

Part iv) implies that if $p-1 \nmid 2u$ and $p \nmid 2u-2$, then $\frac{B_{2u}}{2u} \in \Z_{(p)}$ which is also a special case of Kummer's congruence. See \cite[Theorem 3.2]{Arak} for more details on the classical Kummer's congruence. 

Part iii) recovers a special case of a congruence due to Carlitz \cite[Theorem 3]{Carl} which generalizes the $p$-local von Staudt-Clausen theorem. Indeed, we may assume $2u=zp^r(p-1)$, where $r \geq 0$ and $\gcd(z,p)=1$. Then Part iii) can be rewritten as 
\[\zeta(1-2u)+\frac{2-2u}{zp^{r+1} (zp^r+2)}\equiv 0 \mod \Z_{(p)},\]
since $p \nmid 2u-2$. By further simplifying we get
\[B_{2u} +\frac{1}{p} \equiv 1 \mod p^r\Z_{(p)}.\]
This is a congruence due to Carlitz \cite[Theorem 3]{Carl}.

\end{rem}

\section{An Example: $\Gamma_{\frac{(p-1)(p-2)}{2}}$ and a comparison of the chromatic and Brown-Quillen congruences}

In this section we apply Theorem \ref{theoremb} to the group $\Gamma_{\frac{(p-1)(p-2)}{2}}$, where $p \geq 5$. By a result of Broughton \cite[Section 4]{Broughton2} the group $\Gamma_{\frac{(p-1)(p-2)}{2}}$ has only one conjugacy class of subgroups isomorphic to $\Z/p \times \Z/p$ and does not have any higher rank elementary abelian $p$-subgroups. The genus $u=\frac{(p-1)(p-2)}{2}$ is the minimal genus for which a rank $2$ elementary abelian $p$-subgroup occurs.   Additionally, by \cite{Broughton2} the normalizer of $\Z/p \times \Z/p \leq \Gamma_{\frac{(p-1)(p-2)}{2}}$ is isomorphic to $\Sigma_3 \ltimes (\Z/p)^2$. Next, by \cite[Proposition 1.1]{Lev}, the group $GL_{(p-1)(p-2)} (\Z)$ does not contain $p^2$ torsion and hence nor does the symplectic group $Sp_{(p-1)(p-2)} (\Z)$. Since the Torelli group is torsion-free (see e.g., \cite[Theorem 6.8]{FarbMarg}), we conclude that $\Gamma_{\frac{(p-1)(p-2)}{2}}$ has no elements of order $p^2$. Now by Theorem \ref{theoremb} and Remark \ref{max card}, we have the chromatic congruence at height $n \geq 1$:
\[\chi_{\orb}(\Gamma_{\frac{(p-1)(p-2)}{2}})+(p^n-1)\sum_{\tiny{\begin{aligned}(H)\hspace{0.3cm}\\H \cong \Z/p\end{aligned}}} \chi_{\orb}(N(H))+(p^{2n}-(1+p)p^n+p)\chi_{\orb}(\Sigma_3 \ltimes (\Z/p)^2) \equiv 0 \mod \Z_{(p)}.\]
From here one obtains (see Lemma \ref{generating tuples formula})
\[\chi_{\orb}(\Gamma_{\frac{(p-1)(p-2)}{2}})+\frac{p^n-1}{p-1}\sum_{[g]}\chi_{\orb}(C\td{g})+(p^{2n}-(1+p)p^n+p)\cdot \frac{1}{6p^2} \equiv 0 \mod \Z_{(p)},\]
where $[g]$ runs over the conjugacy classes of order $p$ elements. Finally, for any $n \geq 1$, by \cite{HZ} and the proof of Theorem \ref{kummer} Part i), we see that the chromatic congruence formula at height $n\geq1$ looks as follows:
\[\frac{\zeta(1-2u)}{2-2u}+\frac{p^n-1}{p-1}\Big(-\frac{1}{p}\cdot \frac{\zeta(1-2v)}{2-2v} + \frac{(p-1)^2}{2u \cdot p(2u+2p-2)} \Big )+(p^{2n}-(1+p)p^n+p)\cdot \frac{1}{6p^2} \equiv 0 \mod \Z_{(p)},\]
where $u=\frac{(p-1)(p-2)}{2}$ and $v=\frac{p-1}{2}$. 
After taking the $p$-adic limit as $n \to \infty$, by the proof of Theorem \ref{Brown-Quillen}, we recover the Brown-Quillen congruence:
\[\frac{\zeta(1-2u)}{2-2u}-\frac{1}{p-1}\Big(-\frac{1}{p}\cdot \frac{\zeta(1-2v)}{2-2v} + \frac{(p-1)^2}{2u \cdot p(2u+2p-2)} \Big )+\frac{1}{6p} \equiv 0 \mod \Z_{(p)}.\]
On the other hand the chromatic congruence at height $n=1$ recovers the congruence of Theorem \ref{kummer} Part i)
\[\frac{\zeta(1-2u)}{2-2u}-\frac{1}{p} \cdot \frac{\zeta(1-2v)}{2-2v} +\frac{(p-1)^2}{2u \cdot p(2u+2p-2)}\equiv 0 \mod \Z_{(p)}.\]
The latter and Brown-Quillen congruence can be obtained from each other by using
\[\frac{\zeta(1-2v)}{2v-2} \equiv \frac{1}{p(p-1)(p-3)} \mod \Z_{(p)}\]
which is a consequence of the von Staudt-Clausen theorem. 


\bibliographystyle{amsalpha}
\bibliography{bib}

\vspace{1cm}

\begin{tabular}{l}
\scriptsize The Institute of Mathematics, University of Aberdeen, Fraser Noble building, AB24 3UE Aberdeen, Scotland, UK\\
\scriptsize  \textit{e-mail address:} \href{mailto:irakli.patchkoria@abdn.ac.uk}{irakli.patchkoria@abdn.ac.uk}
\end{tabular}

\end{document}